\def\d{{\rm{d}}}
\def\N{{\mathbb N}}
\def\R{{\mathbb R}}
\newtheorem{lemma}{Lemma}[section]
\newtheorem{theorem}[lemma]{Theorem}
\newtheorem{remark}[lemma]{Remark}
\numberwithin{equation}{section}
\begin{document}

\title[Small perturbations may change the sign of Lyapunov exponents for SDEs]{Small perturbations may change the sign of Lyapunov exponents for linear SDEs}

%%First author
\author{Xianjin Cheng}
\address{X. Cheng: School of Mathematics,
	Dalian University of Technology, Dalian 116024, P. R. China}
\email{xjcheng1119@hotmail.com; xjcheng@mail.dlut.edu.cn}

\author{Zhenxin Liu}
\address{Z. Liu: School of Mathematical Sciences and State Key Laboratory of Structural Analysis for
Industrial Equipment, Dalian University of Technology, Dalian 116024, P. R. China}
\email{zxliu@dlut.edu.cn}

%% Second author
\author{Lixin Zhang}
\address{L. Zhang (Corresponding author): School of Mathematical Sciences,
	Dalian University of Technology, Dalian 116024, P. R. China}
\email{lixinzhang8@163.com; 1031476306@mail.dlut.edu.cn}

%%%%%%%%%%%%%%%%%%%%%%%%%%%%%%%%%%%
%\date{\today}
\date{January 3, 2023}
\subjclass[2010]{37H15, 37H10, 37H30, 37A50.}
\keywords{ Lyapunov exponent;
	linear stochastic differential equation; exponentially decaying perturbation.}

\begin{abstract}
  In this paper, we study the existence of $n$-dimensional linear stochastic differential equations (SDEs) such that the sign of Lyapunov exponents is changed under an exponentially decaying perturbation. First, we show that the equation with all positive Lyapunov exponents will have $n-1$ linearly independent solutions with negative Lyapunov exponents under the perturbation. Meanwhile, we prove that the equation with all negative Lyapunov exponents will also have solutions with positive Lyapunov exponents under another similar perturbation.  Finally, we show that other three kinds of perturbations which appear at different positions of the equation will change the sign of Lyapunov exponents.  \\
\end{abstract}

\maketitle

\centerline{\em Dedicated to Professor Jinqiao Duan for his 60th birthday}
\medskip

\section{Introduction}
Lyapunov exponent measures the rate of separation between the  orbit of a point and those of nearby points. It directly reflects the asymptotic behavior of systems.  Therefore, discussing how the sign of Lyapunov exponents is changed plays an important role in studying dynamical systems. Barreira and Pesin \cite{Pesin1977Lyapunov} mentioned that the zero solution remains exponentially stable for the nonlinear equation under an additional condition called Lyapunov regularity. Similarly, Bochi and Viana \cite{2004Lyapunov} proved the continuity of Lyapunov exponents with respect to dynamical systems defined on manifolds. They both state that small perturbations cannot change the sign of Lyapunov exponents. Then when can the sign of Lyapunov exponents be changed?
\par
First, let us consider the following $n$-dimensional ordinary differential equation
\begin{equation}\label{equ.19}
\d X=A(t)X\d t,
\end{equation}
where $A:\R\rightarrow \R^{n\times n}$ is a bounded $C^{\infty}$ function. It has the fundamental solution system $[X_{1},\dots,X_{n}]$. As we know, Lyapunov exponents of equation (\ref{equ.19}) are defined by the formula
\begin{equation}
\varlimsup\limits_{t\rightarrow\infty}\frac{1}{t}\ln\Vert X_{i}(t)\Vert=	\lambda_{i}, \quad  i=1,\ldots,n,
\end{equation}
 where $\Vert v\Vert=\max_{1\leq i\leq n}|v_i|$, for $v=(v_1,...,v_n)\in \R^n$. The limitation with respect to every solution exists according to Oseledets \cite{1968A} and the equation only has $n$ (counted according to multiplicity) Lyapunov exponents. Recently,
 Izobov \cite{2022On}  proved the existence of an equation with all positive Lyapunov exponents and an exponentially decaying perturbation $Q$ satisfying the estimate $$\Vert Q(t)\Vert \leq Ce^{-\sigma t} \quad  \text{for}~t\geq 0,$$
 where $C$ and $\sigma$ are positive constants such that the perturbed equation
 $$ \d X=[A(t)+Q(t)]X\d t$$
 has $n-1$ linearly independent  solutions with negative Lyapunov exponents. This is an anti-Perron \cite{1930Die} version --- a version opposite to the well-known Perron effect of changing the negative Lyapunov exponents of  equation (\ref{equ.19}) to positive ones. Furthermore, Leonov \cite{2014Lyapunov} and Izobov \cite{2012Lyapunov} also gave many discussions about how the sign of Lyapunov exponents is changed under the perturbation.

In this paper, we want to study It\^{o} linear stochastic differential equation
\begin{equation}\label{equ.1}
	\d X=A(t)X\d t+F(t)X\d W,
\end{equation}
where $A:\R\rightarrow \R^{n\times n}$ and $F:\R\rightarrow \R^{n\times n}$ are bounded $C^{\infty}$ coefficients and $W$ is a one dimensional Brownian motion. According to \cite{2001Lyapunov}, for almost all $\omega$, there exists a $d(\omega)\in \N_{+}$ such that for $1,\ldots,d(\omega),$ there exists a sequence of linear subspaces $\left\{V_{i}(\omega)\subset\R^{n}| i=1,\ldots,d(\omega)\right\}$ satisfying
\begin{center}
$V_{d(\omega)}(\omega)\subset\ldots\subset V_{i}(\omega)\subset\ldots\subset V_{1}(\omega).$
\end{center}
Then for any $v\in V_{i}(\omega)\setminus V_{i+1}(\omega)$,
 Lyaounov exponent is denoted by the limit
\begin{equation}
\lambda_{i}(v,\omega):=	\varlimsup\limits_{t\rightarrow\infty}\frac{1}{t}\ln\Vert \Phi(t,\omega)v\Vert
\end{equation} where $\Phi(t,\omega)$ expresses the linear matrix--valued stochastic flow of equation (\ref{equ.1}) defined by Kunita \cite{1990Stochastic}.  So the  existence of Lyapunov exponents in this paper  always can be guaranteed. The perturbed equation is expressed as follows:
\begin{equation}\label{equ.2}
	\d X=[A(t)+Q(t)]X\d t+F(t)X\d W,
\end{equation}
with $C^{\infty}$ exponentially decaying  $n\times n$ perturbation matrices $Q(t)$ satisfying the
estimate
\begin{equation}\label{equ.3}
	\Vert Q(t)\Vert\leq Ce^{-\sigma t} \quad \text{for}~ t\geq0,
\end{equation}
where $C$ and $\sigma$ are positive constants.
\par We define the linearly independent property of solutions with deterministic initial conditions.  And we say that the solutions $X_{1},\ldots,X_{n}$ are {\em linearly independent}, if the initial conditions $v_{1},\ldots,v_{n}$  are linearly independent where $v_{i}\in \R^{n},$ $i=1,\ldots,n$. Although in this paper we say that many conclusions are  true for a unique solution $X$, they are also true for $cX, c\in\R\setminus\{0\}$.
\par
First of all, we will prove the existence of  equation (\ref{equ.1})  with all positive Lyapunov exponents  and perturbation $Q(t)$ such that perturbed equation (\ref{equ.2}) has $n-1$ linearly independent solutions with negative Lyapunov exponents. This paper shows that there exactly exist such equation and perturbation. And the perturbation is independent of $\omega$. This conclusion implies that although the  perturbation $Q(t)\rightarrow0$, $t\rightarrow\infty$, the sign of Lyapunov exponents is changed eventually.
\par Then,  assume that  original equation (\ref{equ.1}) has all negative Lyapunov exponents.  We can use the similar method  to get a perturbation such that   perturbed equation (\ref{equ.2}) has $n-1$ linearly independent solutions with positive Lyapunov exponents. This  is an ``anti-Perron'' example in the aspect of stochastic differential equations: there exists a perturbation such that the zero sulution loses its stability.
\par Finally, we show that other three kinds of perturbed equations whose perturbations appear at other positions also have the same conclusions above. They can be expressed by following equations
\begin{align}\label{equ.50}
\d X=[A(t)X+Q(t)]\d t+F(t)X\d W,
\end{align}
\begin{align}\label{equ.51}
\d X=A(t)X\d t+[F(t)+Q(t)]X\d W,
\end{align}
\begin{align}\label{equ.52}
\d X=A(t)X\d t+[F(t)X+Q(t)]\d W.
\end{align}
It is well-known that the Brownian motion satisfies the property:
$$\lim\limits_{t\rightarrow\infty}\dfrac{W(t)}{t}=0 \quad a.s.,$$
see e.g. \cite{2006Stochastic}. This fact plays an important role in the proof of theorems.  And it is worth noting that the perturbation in \eqref{equ.50}-\eqref{equ.52} is random and satisfies that for almost all $\omega$, there exists a $T(\omega)$ such that when $t>T(\omega)$,
$$
\Vert Q(t,\omega)\Vert\leq Ce^{-\sigma t}.	
$$
where $C$ and $\sigma$ are positive constants.
It reflects that the random perturbation also can take the same effects on the Lyapunov exponents of linear stochastic differential equations. And there also exists a perturbation which is at the inhomogeneous term of  equation (\ref{equ.1}) such that the signs of Lyapunov exponents have the same change. But it seems not feasible that the Lyapunov exponents changed from all positive to all negative.
\par The remainder of this paper is organized as follows. In the next
section, first of all, we prove the existence of  equation (\ref{equ.1}) with all positive Lyapunov exponents and  perturbation $Q$ such that the perturbed equation (\ref{equ.2}) has $n-1$ linearly independent solutions with negative Lyapunov exponents. Then, we show that there exist equation \eqref{equ.1} with all negative Lyapunov exponents and the perturbation $Q$ such that perturbed equation has $n-1$ linearly independent solutions with positive Lyapunov exponents. Finally, we prove that perturbed equation \eqref{equ.50} also has the similar conclusions above where the perturbation $Q(t,\omega)$ is  random. In the third section, we show that there exist an equation \eqref{equ.1} and the random perturbation $Q(t,\omega)$ such that the sign of Lyapunov exponents of equation \eqref{equ.51} is changed as same as above. And we illustrate perturbed equation \eqref{equ.52} has the same conclusions. In the last section, we discuss  other situations and give some complements to previous sections.

\section{Perturbations at drift term}
\par
In this section, we will discuss the effects of two different perturbations acting on the drift term of  equation \eqref{equ.1}, i.e. equations \eqref{equ.2} and \eqref{equ.50}. The following theorem gives the existence of  2-dimensional equation \eqref{equ.1} with all positive Lyapunov exponents  and $Q(t)$  such that  perturbed equation \eqref{equ.2} has a  solution with negative Lyapunov exponent. 

 \par   Since linear SDEs generate random dynamical systems (see Section 2.3 in \cite{1998Stochastic} for details), we can consider the solutions and other problems
from the pathwise viewpoint. We might as well suppose that the conclusions are true almost surely in this paper.

\subsection{2-dimensional situation}
\begin{theorem}\label{the.1}
	\begin{itemize}
      \item [(1)] For any constants $\lambda_{2}\geq\lambda_{1}>0$, $\theta>1$, there exists a $2$-dimensional equation \eqref{equ.1} with bounded $C^{\infty}$ coefficients and Lyapunov exponents $\lambda_{1},\lambda_{2}$.
      \item [(2)] For this equation and any constant $\sigma\in(0,\frac{(\theta-1)\lambda_{1}}{\theta})$, there exists a $C^{\infty}$ exponentially decaying perturbation $Q$ such that the perturbed equation \eqref{equ.2} has a unique solution $X_0$ with negative Lyapunov exponent $\lambda_{0}\leq\lambda(\theta,\lambda_{1},\sigma)<0$ where $\lambda(\theta,\lambda_{1},\sigma)$ will be given explicitly below;
       the Lyapunov exponent of the other  solution $X_{1}$ which is linearly independent of  $X_{0}$ is positive and is greater than $\vert\lambda_{0}\vert$.
	\end{itemize}
\end{theorem}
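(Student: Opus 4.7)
The plan is to carry out a pathwise reduction of the SDE problem to an essentially deterministic Izobov-type construction \cite{2022On}, using the key fact $W(t)/t\to 0$ a.s.\ that the introduction emphasises. I take the coefficients in triangular form,
$$
A(t)=\begin{pmatrix}a_1(t)&\phi(t)\\ 0&a_2(t)\end{pmatrix},\qquad F(t)=\mathrm{diag}\bigl(f_1(t),f_2(t)\bigr),
$$
with bounded $C^\infty$ entries to be specified. For such a triangular SDE the fundamental matrix is upper triangular with diagonal entries $\Phi_{ii}(t,\omega)=\exp\!\bigl(\int_0^t a_i\,\d s-\tfrac12\int_0^t f_i^2\,\d s+\int_0^t f_i\,\d W\bigr)$, and $\Phi_{12}$ given by It\^o variation of constants. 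Combining $W(t)/t\to 0$ with the strong law for the martingales $\int_0^t f_i\,\d W$, the Lyapunov exponents of the unperturbed equation come out to be $\overline{a_i}-\tfrac12\overline{f_i^2}$; choosing the time averages of $a_i,f_i$ so that these equal the prescribed $\lambda_1\le\lambda_2$ (with $\phi$ bounded and arbitrary) proves part~(1).

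\textbf{Perturbation and cancellation.} For \eqref{equ.2} I write Duhamel's identity $X_0(t)=\Phi(t,\omega)\bigl(v_0+\int_0^t \Phi(s,\omega)^{-1}Q(s)X_0(s)\,\d s\bigr)$, and look for a $C^\infty$ deterministic $Q$ with $\|Q(t)\|\le Ce^{-\sigma t}$ and an $\omega$-dependent initial vector $v_0$ so that $X_0$ decays exponentially a.s. Following the template of \cite{2022On}, I take $Q$ of a simple upper-triangular shape and seek $v_0$ in the one-dimensional ``slow'' subspace of the unperturbed flow. The cancellation is enforced by solving the fixed-point equation $v_0=-\int_0^\infty\Phi(s,\omega)^{-1}Q(s)X_0(s)\,\d s$ in the weighted Banach space $\bigl\{X:\sup_t e^{-\lambda_0 t}\|X(t,\omega)\|<\infty\bigr\}$ via contraction; the contracting constant is controlled by the exponential decay $\sigma$ of $Q$ together with pathwise exponential bounds on $\Phi^{\pm1}$. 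These bounds are obtained from $W(t,\omega)/t\to 0$: for every $\varepsilon>0$ there exists $T(\omega,\varepsilon)$ with $|\int_0^t f_i\,\d W|\le\varepsilon t$ for $t\ge T$, letting one absorb the Brownian factors into the exponential estimates provided $\varepsilon$ is small compared with the deterministic gap $(\theta-1)\lambda_1/\theta-\sigma>0$. The resulting $X_0$ has Lyapunov exponent $\lambda_0\le\lambda(\theta,\lambda_1,\sigma)<0$, with $\lambda(\theta,\lambda_1,\sigma)$ read off from the contraction ratio.

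\textbf{Second solution via the Wronskian.} Liouville's formula for \eqref{equ.2} in dimension two gives
$$
\tfrac{1}{t}\ln|\det\Phi_Q(t,\omega)|\longrightarrow \overline{\mathrm{tr}\,A}-\tfrac12\overline{\mathrm{tr}(F^2)}=\lambda_1+\lambda_2\qquad\text{a.s.,}
$$
since the $Q$-contribution $\int_0^t\mathrm{tr}\,Q\,\d s$ stays bounded ($Q$ is exponentially integrable) and the martingale $\int_0^t\mathrm{tr}\,F\,\d W$ is $o(t)$. Hence the two Lyapunov exponents of \eqref{equ.2} still sum to $\lambda_1+\lambda_2>0$, so any linearly independent companion $X_1$ of $X_0$ has Lyapunov exponent $\lambda_1+\lambda_2-\lambda_0>-\lambda_0=|\lambda_0|$.

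\textbf{Expected main obstacle.} The heart of the proof is the perturbation step: the explicit construction of $Q$ and of the initial vector $v_0$ producing the sharp negative exponent $\lambda(\theta,\lambda_1,\sigma)$. The parameters $\theta$ and $\sigma$ must be tied together in the contraction estimate in such a way that the stated threshold $\sigma<(\theta-1)\lambda_1/\theta$ emerges as exactly the condition ensuring $\lambda_0<0$; everything else (triangular solution formula, Wronskian, pathwise Brownian bounds) is routine once that cancellation has been engineered.
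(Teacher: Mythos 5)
There is a genuine gap at the heart of your perturbation step. Your plan is to solve the fixed--point equation $X_0(t)=-\Phi(t,\omega)\int_t^\infty\Phi(s,\omega)^{-1}Q(s)X_0(s)\,\d s$ by contraction in the weighted space $\{X:\sup_te^{-\lambda_0t}\|X(t,\omega)\|<\infty\}$ with $\lambda_0<0$. But this map is \emph{linear}, so if it is a contraction its unique fixed point is $X_0\equiv0$; to get a nonzero solution you must retain the homogeneous term $\Phi(t,\omega)v_0$, and that term cannot lie in your weighted space, since every nonzero $v_0$ has $\varlimsup_{t\to\infty}\frac1t\ln\|\Phi(t,\omega)v_0\|\in\{\lambda_1,\lambda_2\}>0$ by hypothesis. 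More fundamentally, if the equation you build in part (1) is Lyapunov regular (which a triangular system with convergent time averages, as you describe, typically is), then by the Barreira--Pesin/Perron theory quoted in the introduction \emph{no} exponentially decaying perturbation can make an exponent negative, so the theorem would be false for your unperturbed equation. The statement is only achievable because the construction deliberately destroys regularity: the paper takes $t_k=\theta^k$ and piecewise constant $a_i(t)$ that alternate between $+\alpha_i$ and $-\alpha_i$ on consecutive intervals $[t_{k},t_{k+1})$, so that each component grows on half the (geometrically long) intervals and decays on the other half, the limsup still giving $\lambda_i=\frac{\theta-1}{\theta+1}\alpha_i-\frac12\beta^2>0$. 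The perturbation then does active work: a single off--diagonal entry $q_{21}$ (resp.\ $q_{12}$), supported in a unit window at the end of each interval and of size $d_{2k}e^{-\sigma t}$, pumps mass between the components so as to rotate the solution vector onto the axis that is about to decay; the amplitude $d_{2k}$ is fixed by an intermediate--value argument in the angle condition $\tan\gamma_{2k}=e^{-\eta t_{2k}}$, and the condition $\sigma<\frac{(\theta-1)\lambda_1}{\theta}$ is exactly what makes this rotation achievable with an exponentially decaying $q_{21}$. None of this oscillation--plus--rotation mechanism appears in your proposal, and without it the negative exponent cannot be produced.

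Your Wronskian step is close in spirit to the paper's uniqueness argument, but it too leans on regularity: for the oscillating coefficients the limit $\frac1t\ln|\det\Phi_Q(t,\omega)|$ does not exist, and its limsup equals $(\alpha_2-\alpha_1)\frac{\theta-1}{\theta+1}-\beta^2$, which is strictly smaller than $\lambda_1+\lambda_2$. The correct conclusion (as in the paper's inequality \eqref{equ.58}) is only the one--sided bound $\lambda[X_0]+\lambda[X_1]\geq\varlimsup_t\frac1t\ln|\det\Phi_Q(t,\omega)|\geq(\alpha_2-\alpha_1)\frac{\theta-1}{\theta+1}-\beta^2$, which one must additionally arrange to be nonnegative by taking $\beta$ small; this still yields uniqueness of the decaying solution and $\lambda[X_1]>|\lambda_0|$, but not via ``the exponents still sum to $\lambda_1+\lambda_2$.''
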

\begin{proof}

\par  \textbf{(1)} First of all, we will find an equation \eqref{equ.1} with Lyapunov exponents $\lambda_{1}$ and $\lambda_{2}$. We set $t_{k}=\theta^{k}, k\in \N$ and consider the linear diagonal equation
\begin{equation}\label{equ.4}
	\d X=\mathrm{diag}[a_{1}(t), a_{2}(t)]X\d t+\mathrm{diag}[b(t),b(t)]X\d W=: A(t)X\d t+F(t)X\d W,
\end{equation}
where the coefficients are defined as follows:
\begin{equation}\label{equ.18}
 a_{i}(t) =(-1)^{i}\times 	\left\{
	\begin{aligned}
	&-\alpha_{i},   &t\in[t_{2k-1}, t_{2k}),\\
	&\alpha_{i},  &t\in[t_{2k}, t_{2k+1}),\\
	\end{aligned}
\right.
\quad  b(t)\equiv\beta, \quad k\in \N, \quad i=1,2.
\end{equation}
It is obvious that $A(t)$ is a piecewise constant matrix and $F(t)\equiv F$ is a constant matrix,  where $\alpha_{1}$ and $\alpha_{2}$ will be given explicitly below.
According to the classical It\^o integral \cite{1976Stochastic}, we obtain the components of the  solution $X(t)=(X_{1}(t),X_{2}(t))$ with initial condition $X(1)=X_{0}=(1,1)$
$$X_{1}(t)=e^{\int_{1}^{t}a_{1}(\tau)\,\d\tau-\frac{1}{2}\beta^{2}t+\beta[W(t)-W(t_{0})]},$$
$$X_{2}(t)=e^{\int_{1}^{t}a_{2}(\tau)\,\d\tau-\frac{1}{2}\beta^{2}t+\beta[W(t)-W(t_{0})]},$$
which imply the value of Lyapunov exponents
\begin{equation}\label{equ.13}
	\hat{\lambda}_{i}=\varlimsup\limits_{t\rightarrow\infty}\frac{1}{t}\ln\Vert X_{i}(t)\Vert=\frac{\theta-1}{\theta+1}\alpha_{i}-\frac{1}{2}\beta^{2}, \quad i=1, 2.
\end{equation}
The limit $\lim\limits_{t\rightarrow\infty}\frac{W(t)}{t}=0$ is used in the above equality.
Then we can choose appropriate values of $\alpha_{1}$, $\alpha_{2} $ and $\beta$ to make the equalities $\hat{\lambda}_{i}=\lambda_{i}$, $i=1,2$ true.
\par
 \textbf{(2)} We consider the following equation  with deterministic initial value $X_{0}=(1,1)$:
 \begin{equation}\label{equ.61}
 \d Y(t)=[A(t)+Q(t)]Y(t)\d t,\quad Y(1)=X_{0},
 \end{equation}
 	where $Q(t)$ satisfies the condition \eqref{equ.3}.
We can take  $Q(t)$ such that the norm of $Y(t)$ and the angle $\gamma'_{2k-1}:=\angle \left\{Y(t_{2k-1}), Ox_{2}\right\}$  at time $t=t_{2k-1}$  satisfy the conditions
 $$	e^{\xi_{1} t_{2k-1}}\leq\Vert Y(t_{2k-1})\Vert \leq e^{\xi_{2}t_{2k-1}},$$
 $$\tan\gamma'_{2k-1}=e^{-\eta t_{2k-1}},\quad  \eta=-\theta\sigma+(\theta-1)(\alpha_{1}+\alpha_{2})>0,$$
 where $\xi_{1}=\frac{\theta\sigma}{\theta-1}-(\alpha_{2}+\frac{1}{2}\beta^{2})<0$  , $\xi_{2}=\frac{\theta\sigma}{\theta-1}-(\alpha_{1}+\frac{1}{2}\beta^{2})<0$  and $\angle \left\{Y(t_{2k-1}), Ox_{2}\right\}$ refers to the angle between $\overrightarrow{OY}(t_{2k-1})$ and the positive $x_{2}$-axis. For the perturbed SDE
\begin{equation}
	\d X=[A(t)+Q(t)]X\d t+F(t)X\d W,\quad X(1)=X_{0},
\end{equation}
 the solution $X(t)$ can be expressed by $X(t)=Y(t)e^{\beta W(t)}$, where $Y(t)$ satisfies \eqref{equ.61}.
So the norm of $X(t)$ and the angle $\gamma_{2k-1}:=\angle \left\{X(t_{2k-1}), Ox_{2}\right\}$ at time $t=t_{2k-1}$ satisfy the conditions
\begin{equation}\label{equ.62}
\begin{aligned}
&	e^{\xi_{1} t_{2k-1}}e^{\beta W(t_{2k-1})}\leq\Vert X(t_{2k-1})\Vert \leq e^{\xi_{2}t_{2k-1}}e^{\beta W(t_{2k-1})},\\
&\tan\gamma_{2k-1}=e^{-\eta t_{2k-1}}.
\end{aligned}
\end{equation}
 In the later proof, we will show that the estimate 
	$$	e^{\xi_{1}t_{2k}}e^{\beta W(t_{2k})}\leq\Vert X(t_{2k})\Vert \leq e^{\xi_{2}t_{2k}}e^{\beta W(t_{2k})}$$
is true,  and for any $\epsilon>0$ and almost all $\omega$, there exists a $k(\omega)$ such that for any $k>k(\omega)$ we have another estimate
$$t^{-1}\ln X_{1}(t,\omega)\leq t_{2k}^{-1}\ln X_{1}(t_{2k},\omega)+\epsilon\leq  t_{2k}^{-1}\ln \Vert X(t_{2k},\omega)\Vert+\epsilon,\quad t\in[t_{2k-1},t_{2k}].$$
These  imply the following inequality 
$$\xi_{1}\leq\varlimsup\limits_{t\rightarrow\infty}\frac{1}{t}\ln\Vert X(t)\Vert\leq\xi_{2},$$
so the Lyapunov exponent of $X(t)$ is negative.

\par  Then, we  construct the perturbation $Q(t)$ on the interval $[t_{2k-1},t_{2k})$. We define the  entries of the matrix $Q(t)=(q_{ij}(t))$ as follows:
$$q_{11}(t)=q_{12}(t)=q_{22}(t)=0,\quad t\in[t_{2k-1},t_{2k}],$$
\begin{equation}\label{equ.54}
	q_{21}(t)=d_{2k}e^{-\sigma t}\left\{
\begin{aligned}
	&0,   &t\in[t_{2k-1}, \tau_{1}),\\
	&e_{01}(t, \tau_{1},\tau_{2}),  &t\in[\tau_{1}, \tau_{2}),\\
	&1,   &t\in[\tau_{2}, \tau_{3}),\\
	&e_{10}(t, \tau_{3},\tau_{4}),  &t\in[\tau_{3}, \tau_{4}],\\
\end{aligned}
\right.
\end{equation}
	$$\epsilon(t)=e^{-t^{2}},\quad \tau_{1}=t_{2k}-1-2\epsilon(t_{2k}), \quad \tau_{2}=\tau_{1}+\epsilon(t_{2k}),  $$
$$\tau_{3}=\tau_{2}+1, \quad  \tau_{4}=\tau_{3}+\epsilon(t_{2k})=t_{2k},$$
where $d_{2k}$ will be given explicitly later. The function $e_{\alpha\beta}(\eta, \eta_{1},\eta_{2})$ is the $C^{\infty}$ Gelbaum--Olmsted function \cite{2019Construction}
\begin{align}\label{equ.60}
e_{\alpha\beta}(\eta, \eta_{1},\eta_{2})=\alpha+(\beta-\alpha)\exp\left\{-(\eta-\eta_{1})^{-2}\exp[-(\eta-\eta_{2})^{-2}]\right\},
\end{align}
\noindent
defined on the interval with endpoints $\eta=\eta_{1}$ and $\eta=\eta_{2}$ and takes the values
$$e_{\alpha\beta}(\eta_{1}, \eta_{1},\eta_{2})=\alpha, \quad e_{\alpha\beta}(\eta_{2}, \eta_{1},\eta_{2})=\beta ,$$
at these endpoints, which can polish the perturbation and equations.
\par The perturbed equation \eqref{equ.2} thereby can be expressed on the interval $[t_{2k-1},t_{2k})$ by two equations
$$\d X_{1}=\alpha_{1}X_{1}\d t+\beta X_{1}\d W,\quad \d X_{2}=[-\alpha_{2}X_{2}+q_{21}(t)X_{1}]\d t+\beta X_{2}\d W.$$
After integrating the equations, we obtain the following expressions for the components of the solution $X(t)=(X_{1}(t),X_{2}(t))$:
$$
X_{1}(t)=X_{1}(t_{2k-1})e^{(\alpha_{1}-\frac{1}{2}\beta^{2})(t-t_{2k-1})+\beta[W(t)-W(t_{2k-1})]}   ,\quad t\in [t_{2k-1},t_{2k}),$$
\begin{equation}\label{equ.42}
X_{2}(t)=X_{2}(t_{2k-1})e^{(-\alpha_{2}-\frac{1}{2}\beta^{2})(t-t_{2k-1})+\beta[W(t)-W(t_{2k-1})]}   ,\quad t\in [t_{2k-1},\tau_{1}],
\end{equation}
\begin{equation}\label{equ.43}
	\begin{aligned}	X_{2}(t)=&e^{(-\alpha_{2}-\frac{1}{2}\beta^{2})(t-\tau_{1})+\beta[W(t)-W(\tau_{1})]}  \\
\cdot \big[X_{2}(\tau_{1}&)+\int_{\tau_{1}}^{t} q_{21}(\tau)X_{1}(\tau)e^{(\alpha_{2}+\frac{1}{2}\beta^{2})(\tau-\tau_{1})-\beta[W(\tau)-W(\tau_{1})]}\d \tau \big] ,\quad t\in [\tau_{1},t_{2k}).
	\end{aligned}
\end{equation}
\par In one case, we suppose $d_{2k}=0$. Then $X_{2}(t)$ has expression \eqref{equ.42} for all $t\in[t_{2k-1},t_{2k})$. Therefore, in this case, $\gamma_{2k}=\angle\left\{X(t_{2k}),Ox_{1}\right\}$ satisfies the following formula
\begin{equation}\label{equ.59}
\begin{aligned}
\tan \gamma_{2k}&=\frac{X_{2}(t_{2k})}{X_{1}(t_{2k})}\\
&=\cot \gamma_{2k-1}e^{-(\alpha_{1}+\alpha_{2})(t_{2k}-t_{2k-1})}\cdot e^{\beta[W(t_{2k})-W(t_{2k-1})]-\beta[W(t_{2k})-W(t_{2k-1})]}\\
&=e^{-\sigma t_{2k}}.
\end{aligned}
\end{equation}
\par In the other case, for $d_{2k}=-d<0$, $X_{2}(t)$ has expression \eqref{equ.42} on $[t_{2k-1},\tau_{1}]$ and expression \eqref{equ.43} on $[\tau_{1},t_{2k}]$, and we will prove that it can vanish at time $t=t_{2k}$ for a $d_{2k}<0$ by the following fact
\begin{equation}\label{equ.7}
\begin{aligned}
&\tan \gamma_{2k}=\frac{X_{2}(t_{2k})}{X_{1}(t_{2k})}\\
&=\frac{X_{2}(\tau_{1})+\int_{\tau_{1}}^{t_{2k}} q_{21}(\tau)X_{1}(\tau)e^{(\alpha_{2}+\frac{1}{2}\beta^{2})(\tau-\tau_{1})-\beta[W(\tau)-W(\tau_{1})]}\d \tau }{X_{1}(\tau_{1})}\\
&\quad\cdot e^{(-\alpha_{2}-\frac{1}{2}\beta^{2})(t_{2k}-\tau_{1})+\beta[W(t_{2k})-W(\tau_{1})]}
\cdot e^{-(\alpha_{1}-\frac{1}{2}\beta^{2})(t_{2k}-\tau_{1})-\beta[W(t_{2k})-W(\tau_{1})]}.
\end{aligned}
\end{equation}
In order to simplify this expression, we only consider whether the first  component of \eqref{equ.7} vanishes at time $t=t_{2k}$,
\begin{equation}\label{equ.44}
\begin{aligned}
&\frac{X_{2}(\tau_{1})+\int_{\tau_{1}}^{t_{2k}} q_{21}(\tau)X_{1}(\tau)e^{(\alpha_{2}+\frac{1}{2}\beta^{2})(\tau-\tau_{1})-\beta[W(\tau)-W(\tau_{1})]}\d \tau }{X_{1}(\tau_{1})}\\
\leq& \frac{X_{2}(\tau_{1})}{X_{1}(\tau_{1})}+\int_{\tau_{1}}^{t_{2k}}q_{21}(\tau)e^{(\alpha_{1}+\alpha_{2})(\tau-\tau_{1})} \cdot e^{\beta[W(\tau)-W(\tau_{1})]-\beta[W(\tau)-W(\tau_{1})]}\d \tau \\
\leq&\cot \gamma_{2k-1}e^{-(\alpha_{1}+\alpha_{2})(\tau_{1}-t_{2k-1})}-d\int_{\tau_{2}}^{\tau_{3}}e^{-\sigma \tau+(\alpha_{1}+\alpha_{2})(\tau-\tau_{1})} \d \tau\\
\leq & e^{-\sigma t_{2k}+(\alpha_{1}+\alpha_{2})(t_{2k}-\tau_{1})}-cde^{-\sigma\tau_{1}}\\
\leq & e^{-\sigma \tau_{1}}(e^{(\alpha_{1}+\alpha_{2}-\sigma)(t_{2k}-\tau_{1})}-cd)\leq 0,
\end{aligned}
\end{equation}
where $c=\frac{e^{(\alpha_{2}+\alpha_{1}-\sigma)(\tau_{3}-\tau_{1})}-e^{(\alpha_{2}+\alpha_{1}-\sigma)(\tau_{2}-\tau_{1})}}{\alpha_{2}+\alpha_{1}-\sigma}$.
These inequalities hold by virtue of the requirement $\sigma<\alpha_{1}+\alpha_{2}$.  Then  estimate \eqref{equ.44} is equivalent to the inequality $\tan \gamma_{2k}\leq 0$.  From the inequality, we see that the angle  $\gamma_{2k}$ vanishes when $d=\frac{1}{c}e^{(\alpha_{1}+\alpha_{2}-\sigma)(t_{2k}-\tau_{1})}$. And $\tan \gamma_{2k}$ is independent of $\omega$.  Therefore, by the continuous dependence of $\tan \gamma_{2k}$ on $d_{2k}$ and estimates \eqref{equ.59}, \eqref{equ.7} and \eqref{equ.44}, there exists a deterministic $d_{2k}\leq d$ such that the identity $\tan\gamma_{2k}=e^{-\eta t_{2k}}$ holds.  It is immediate to check that a deterministic appropriate $\hat{d}_{2k}$ can be chose to let this angle vanish at $t_{2k}$ which will be used in the proof of Theorem \ref{the.2}.
\par
Finally, we will show that the norm of the solution $X(t)$ on the interval $[t_{2k-1},t_{2k})$ meets the requirements. Since $\xi_{2}+\alpha_{2}+\frac{1}{2}\beta^{2}>0$, we have the estimates for the second component
\begin{equation}\label{equ.9}
	\begin{aligned}
		0&<\vert X_{2}(t)\vert\leq\vert X_{2}(t_{2k-1})\vert e^{(-\alpha_{2}-\frac{1}{2}\beta^{2})(t-t_{2k-1})+\beta[W(t)-W(t_{2k-1})]}\\
		&\leq e^{\xi_{2}t-(\xi_{2}+\alpha_{2}+\frac{1}{2}\beta^{2})(t-t_{2k-1})}e^{\beta W(t)}\\
		&\leq e^{\xi_{2}t}e^{\beta W(t)}.
	\end{aligned}
\end{equation}
\par
At the same time, for $X_{1}(t_{2k})$, we have the following estimate
\begin{equation}\label{equ.11}
	\begin{aligned}
	&t_{2k}^{-1}\ln X_{1}(t_{2k})\\
		\leq&t_{2k}^{-1}\ln[X_{2}(t_{2k-1})\tan\gamma_{2k-1}e^{(\alpha_{1}-\frac{1}{2}\beta^{2})(t_{2k}-t_{2k-1})}e^{\beta[W(t_{2k})-W(t_{2k-1})]}]\\
		\leq &t_{2k}^{-1}\ln[e^{\xi_{2}t_{2k-1}}e^{\beta W(t_{2k-1})}e^{-\eta t_{2k-1}}]\\
		&+t_{2k}^{-1}\ln[e^{(\alpha_{1}-\frac{1}{2}\beta^{2})(t_{2k}-t_{2k-1})}e^{\beta[W(t_{2k})-W(t_{2k-1})]}]\\
		\leq&t_{2k}^{-1}\ln e^{[\xi_{2}-\eta+(\theta-1)(\alpha_{1}-\frac{1}{2}\beta^{2})]t_{2k-1}+\beta W(t_{2k})}\\
		\leq&\xi_{2}+\frac{\beta W(t_{2k})}{t_{2k}}.
	\end{aligned}
\end{equation}
In the last inequality, we need $\xi_{2}-\eta+(\theta-1)(\alpha_{1}-\frac{1}{2}\beta^{2})\leq \theta\xi_{2}$, which holds by our choice $\xi_{2}=\frac{\theta\sigma}{\theta-1}-(\alpha_{1}+\frac{1}{2}\beta^{2})$.
\par Now we make an estimate of $X_{1}(t)$, $t\in[t_{2k-1},t_{2k})$. Note that this step is not a part of the induction, so it has no effect on the induction. For the first component $X_{1}(t)$ of the solution $X(t)$ on the interval $[t_{2k-1},t_{2k})$,   we have the following  expression
\begin{equation}\label{equ.10}
	\begin{aligned}
	t^{-1}\ln X_{1}(t)=&t^{-1}[(\alpha_{1}-\frac{1}{2}\beta^{2})(t-t_{2k-1})]+t^{-1}\beta[W(t)-W(t_{2k-1})]\\
		&+t^{-1}\ln X_{1}(t_{2k-1}),
	\end{aligned}	
\end{equation}
where $k$ is big enough and $0<X_{1}(t_{2k-1})<1$. Note that the first and third terms on the right-hand side of \eqref{equ.10} are monotonously increasing with respect to $t$. Now we consider the solutions from the pathwise viewpoint. According to the fact $\lim\limits_{t\rightarrow\infty}\frac{W(t)}{t}=0$ $a.s.$,  for any $\epsilon>0$, there exists a $T(\omega)$ such that  $\vert W(t,\omega)\vert<\frac{\epsilon}{4\beta} t$, as $t>T(\omega)$. For almost all $\omega$, there exists a $k(\omega)$ such that  $t_{2k(\omega)-1}\geq T(\omega)$.  So for any $t_{1},t_{2}\in [t_{2k(\omega)-1},t_{2k(\omega)})$ and $t_{1}>t_{2}$, the second term has the following estimate:
$$\vert t_{1}^{-1}\beta[W(t_{1},\omega)-W(t_{2},\omega)]\vert<\frac{1}{2}\epsilon. $$ 
We denote the maximum point of \eqref{equ.10} by $t_{\max}$. We get a relationship
$$	t_{\max}^{-1}\ln X_{1}(t_{\max},\omega)-	t_{2k}^{-1}\ln X_{1}(t_{2k},\omega)<\epsilon.$$
Therefore, we have the estimate
$$t^{-1}\ln X_{1}(t,\omega)\leq	t_{\max}^{-1}\ln X_{1}(t_{\max},\omega)<t_{2k}^{-1}\ln X_{1}(t_{2k},\omega)+\epsilon,\quad t\in[t_{2k(\omega)-1},t_{2k(\omega)}).$$
\quad Next, we continue to prove the induction.  In the same way as \eqref{equ.11}, we get the following lower bound
\begin{equation}\label{equ.12}
\begin{aligned}
	t_{2k}^{-1}\ln X_{1}(t_{2k}) &\geq t_{2k}^{-1}\ln \big[e^{[\xi_{1}-\eta+(\theta-1)(\alpha_{1}-\frac{1}{2}\beta^{2})]t_{2k-1}+\beta W(t_{2k})}\big]\\
	&\geq\xi_{1}+\frac{\beta W(t_{2k})}{t_{2k}}.
\end{aligned}
\end{equation}
\par In conclusion, by the inequalities $0<X_{2}(t_{2k})<X_{1}(t_{2k})$ and \eqref{equ.9}--\eqref{equ.12},  we obtain the desired estimates
$$	e^{\xi_{1}t_{2k}}e^{\beta W(t_{2k})}\leq\Vert X(t_{2k})\Vert \leq e^{\xi_{2}t_{2k}}e^{\beta W(t_{2k})},$$
and we have   another estimate,  for almost all $\omega$ and any $k>k(\omega)$,
$$t^{-1}\ln X_{1}(t,\omega)\leq t_{2k}^{-1}\ln X_{1}(t_{2k},\omega)+\epsilon\leq  t_{2k}^{-1}\ln \Vert X(t_{2k},\omega)\Vert+\epsilon,\quad t\in[t_{2k-1},t_{2k}).$$
 \quad On the next interval $[t_{2k},t_{2k+1})$, by the similar method above, we can get the estimates of the norm of $X(t)$ and angle $\gamma_{2k+1}=\angle \left\{X(t_{2k+1}),Ox_{2}\right\}$ at $t_{2k+1}$ as follows:
$$	e^{\xi_{1} t_{2k+1}}e^{\beta W(t_{2k+1})}\leq\Vert X(t_{2k+1})\Vert \leq e^{\xi_{2}t_{2k+1}}e^{\beta W(t_{2k+1})},$$
$$\tan\gamma_{2k+1}=e^{-\eta t_{2k+1}}, \quad \eta=-\theta\sigma+(\theta-1)(\alpha_{1}+\alpha_{2})>0.$$
During the proof of the above esimate, the following inequality is required:
\begin{equation}
	\begin{aligned}
t_{2k+1}^{-1}\ln X_{2}(t_{2k+1})\leq&t_{2k+1}^{-1}\ln e^{[\xi_{2}-\eta+(\theta-1)(\alpha_{2}-\frac{1}{2}\beta^{2})]t_{2k}+\beta W(t_{2k+1})}\\
		\leq&\xi_{2}+\frac{\beta W(t_{2k+1})}{t_{2k+1}}, \quad t\in[t_{2k},t_{2k+1}),
	\end{aligned}
\end{equation}
which holds by our choice $\xi_{2}=\frac{\theta\sigma}{\theta-1}-(\alpha_{1}+\frac{1}{2}\beta^{2})$. And for any $\epsilon>0$ and almost all $\omega$, there exists a $k(\omega)$ such that for any $k>k(\omega)$, we have the estimate
$$t^{-1}\ln X_{2}(t,\omega)\leq t_{2k+1}^{-1}\ln X_{2}(t_{2k+1},\omega)+\epsilon\leq t_{2k+1}^{-1}\ln \Vert X(t_{2k+1},\omega)\Vert+\epsilon,\quad t\in[t_{2k},t_{2k+1}).$$
The constructed perturbation on $[t_{2k},t_{2k+1})$ can be expressed by the matrix:
$$
	Q(t)=\left(
\begin{array}{cc}
	0 & q_{12}(t) \\
	0 & 0
\end{array}\right),\quad t\in[t_{2k},t_{2k+1}),$$
where $q_{12}(t)$ is defined similarly as  \eqref{equ.54}.
\par
By induction we extend these constructions of perturbed equation \eqref{equ.2} to get a $C^{\infty}$ perturbation and a desired solution $X_{0}$ with exponent $\lambda(X)=\lambda_{0}<0$ on the interval $[t_{1},\infty)$ where $\xi_{1}\leq\lambda_{0}\leq \xi_{2}=\lambda(\theta,\lambda_{1},\sigma)$. But $A$ still has countably many discontinuity points  $t=t_{k}$. By the similar function \eqref{equ.60}, we polish function $A$. Then we will get a $C^{\infty}$ coefficient $B$. The $C^{\infty}$ equation
$$	\d X=B(t)X\d t+F(t)X\d W$$
has same Lyapunov exponents with original equation \eqref{equ.1} according to the definition of Lyapunov exponents. We also can use the similar method as above to prove the Lyapunov exponents of the perturbed equation
$$	\d X=[B(t)+Q(t)]X\d t+F(t)X\d W $$
 are same with respect to  equation \eqref{equ.2}.  So the desired equation is constructed.
\par Finally we will prove the uniqueness of the solution $X_{0}(t)$ with negative exponent of  perturbed linear equation \eqref{equ.2}.  Therefore, assuming the opposite --- there exists a second solution $X_{1}(t)$ with negative exponent that is linearly independent of $X_{0}(t)$ --- we would have the following contradiction according to the Lyapunov inequality \cite{1998Stochastic} and Liouville's theorem \cite{Vrko1978Liouville}:
\begin{equation}\label{equ.58}
\begin{aligned}
0>\lambda[X_{0}]+\lambda[X_{1}]\geq&\varlimsup\limits_{t\rightarrow\infty}\frac{1}{t}\ln\vert \det\Phi(t)\vert\\
\geq&\varlimsup\limits_{t\rightarrow\infty}\frac{1}{t}\ln e^{\int_{1}^{t}tr(A+Q)\d\tau-\frac{1}{2}\int_{1}^{t}tr(F^{2})\d\tau+\int_{1}^{t}trF\d W}\\
=&(\alpha_{2}-\alpha_{1})\frac{\theta-1}{\theta+1}-\beta^{2}\geq0.
\end{aligned}
\end{equation}
Let $\beta$ small enough satisfying $(\alpha_{2}-\alpha_{1})\frac{\theta-1}{\theta+1}-\beta^{2}\geq0$. So $X_{0}$ is the unique solution with negative Lyapunov exponent by the contradiction.  Note that the above inequality proves the third part of this theorem. The proof is complete.
\end{proof}
\par The inequality \eqref{equ.58} implies the following assertion, which will be useful to prove Theorem \ref{the.2}.\\
\textbf{Assertion.} For the solution $X_{1}(t)$ and for any sequence $\left\{k(l)\right\}$ of odd numbers with the property $\frac{k(l)}{k(l+1)}\rightarrow0$ as $l\rightarrow\infty$, we have the inequality
$$
\varlimsup\limits_{l\rightarrow\infty}\frac{1}{t_{k(l+1)}}\ln \frac{\Vert X_{1}(t_{k(l+1)})\Vert}{\Vert X_{1}(t_{k(l)})\Vert}\geq -\lambda_{0}>0.$$

\subsection{$n$-dimensional situation}
\par
We consider $n$-dimensional situation. The next questions are whether  Theorem \ref{the.1} still holds when the dimension of equation \eqref{equ.2} is $n$  and  the possible number of linearly independent solutions with negative Lyapunov exponents. By the similar method, we have the following result.
\par
\begin{theorem}\label{the.2}
	\begin{itemize}
      \item [(1)] For any constants $\lambda_{n}\geq\ldots\geq\lambda_{2}\geq\lambda_{1}>0$, $n\geq3,$ $\theta>1$,
      there exists an $n$-dimensional linear equation \eqref{equ.1} with bounded $C^{\infty}$ coefficients and Lyapunov exponents $\lambda_{i}$, $i=1,2,\ldots,n$.
      \par \item [(2)] For this equation and any real number $  0<\sigma<\frac{(\theta-1)\lambda_{1}}{\theta}$, there exists a $C^{\infty}$ exponentially decaying perturbation $Q$ such that  the $n$-dimensional perturbed equation \eqref{equ.2} has exactly $n-1$ linearly independent solutions
      $$X_{1},X_{2},\ldots,X_{n-1}$$
      with negative exponents $\Lambda_{i},$ $i=1,\ldots,n-1$ which is defined similarly as in Theorem \ref{the.1};
     the Lyapunov exponent of the other  solution $X_{n}$ which is linearly independent of  $X_{i}, i=1,\ldots,n-1$ is positive and is greater than $\vert\sum_{i=1}^{n-1}\Lambda_{i}\vert$.
	\end{itemize}
\end{theorem}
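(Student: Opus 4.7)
My strategy is to extend the two-dimensional construction of Theorem~\ref{the.1} inductively across $n$ coordinate directions, starting from a diagonal base SDE and building $Q$ by stacking $n-1$ successive off-diagonal ``rotation'' sub-steps on each long interval $[t_{k-1},t_k)$. Each individual sub-step is essentially a copy of the rotation used in the proof of Theorem~\ref{the.1}, but targeted at a different initial basis vector.

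For part~(1), I take $A(t)=\mathrm{diag}[a_{1}(t),\ldots,a_{n}(t)]$ with the $a_i$ piecewise constant in the spirit of \eqref{equ.18}, staggered in sign so that each axis is alternately the fastest-growing one, and $F(t)\equiv\beta I_n$. The components of the solution decouple into $\exp\{\int_{1}^{t}a_i(\tau)\,\d\tau-\frac{1}{2}\beta^{2}t+\beta W(t)\}$, and the limsup computation leading to \eqref{equ.13} shows $\hat\lambda_i=\frac{\theta-1}{\theta+1}\alpha_i-\frac{1}{2}\beta^{2}$. Solving $\hat\lambda_i=\lambda_i$ determines the $\alpha_i$, with $\beta$ kept small enough to be reused in the trace computation at the end.

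For part~(2), I would construct $Q$ inductively. On the $k$-th long interval I split a terminal window of length of order $\epsilon(t_k)=e^{-t_k^{2}}$ into $n-1$ disjoint sub-intervals; on the $j$-th sub-interval I activate only one off-diagonal entry of $Q$, pointing from the axis that is currently the fastest-growing into the $j$-th axis, using exactly the Gelbaum--Olmsted profile of \eqref{equ.54}--\eqref{equ.60}. The computation \eqref{equ.7}--\eqref{equ.44} then lets me tune the amplitude $d_{k,j}$ by continuity so that the solution starting at $e_j$ is rotated at time $t_k$ into a direction whose tangent toward the dominant axis equals $e^{-\eta t_k}$. The norm and angle bounds \eqref{equ.9}--\eqref{equ.12} go through verbatim for each of these $n-1$ tracked solutions, yielding in the limit $n-1$ linearly independent solutions $X_1,\ldots,X_{n-1}$ with Lyapunov exponents $\Lambda_i\in[\xi_1,\xi_2]\subset(-\infty,0)$. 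A final Gelbaum--Olmsted polishing at the jumps of $A$ restores $C^\infty$ regularity without affecting the exponents, exactly as in the 2D case.

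For exactness and the lower bound on the remaining exponent, I would reuse the argument of \eqref{equ.58}: by the Lyapunov inequality and Liouville's theorem,
\begin{equation*}
\sum_{i=1}^{n}\lambda[X_i]\;\geq\;\varlimsup_{t\to\infty}\frac{1}{t}\ln|\det\Phi(t)|,
\end{equation*}
and the right-hand side reduces (since $Q$ decays exponentially and $W(t)/t\to 0$ kills the Brownian term) to the $n$-dimensional analog of the trace expression in \eqref{equ.58}, which can be made strictly positive by choosing $\beta$ small enough. Hence not all $n$ exponents can be negative, so exactly $n-1$ are negative and $\lambda[X_n]\geq|\sum_{i=1}^{n-1}\Lambda_i|$. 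The main obstacle I anticipate is the bookkeeping inside the inductive sub-step: one must check that activating the $j$-th off-diagonal entry does not materially disturb the angles already fixed for solutions $1,\ldots,j-1$ on earlier sub-intervals, nor introduce spurious growth that survives into the next long interval. This is controlled by the doubly-exponentially small window $\epsilon(t)=e^{-t^2}$ together with $\sigma<\frac{(\theta-1)\lambda_1}{\theta}$, but verifying compatibility for all $j$ simultaneously and ensuring that the $n-1$ target initial directions remain linearly independent in the limit -- where the Assertion following Theorem~\ref{the.1}, governing the growth of the positive-exponent solution between well-separated times, is the key ingredient to rule out degeneration into the $X_n$-direction -- is the delicate part of the recursion.
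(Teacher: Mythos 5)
Your reduction to $2$-dimensional rotations in coordinate planes is the right instinct, but the way you schedule them is where the argument breaks. You propose to process all $n-1$ tracked solutions within \emph{each} long interval $[t_{k-1},t_k)$ by stacking $n-1$ sub-windows at its end. The difficulty is that a single matrix perturbation $Q(t)$ acts on every solution simultaneously: an entry $q_{jm}$ feeds the $m$-th component of \emph{each} tracked solution into its $j$-th component, so you cannot tune the $n-1$ amplitudes independently once the solutions share a large component along the currently growing axis --- and after one pass of the rotation mechanism of Theorem \ref{the.1} every tracked solution ends up within angle $e^{-\eta t_k}$ of that same growing axis, i.e.\ they become nearly parallel and indistinguishable to the perturbation. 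Your proposed fix (the doubly-exponentially small window $\epsilon(t)=e^{-t^2}$) controls the size of the perturbation, not this interference. The paper resolves it structurally: it introduces super-intervals $T_l=t_{k(l)}$ with $k(l)/k(l+1)\to 0$, actively contracts only \emph{one} solution per super-interval in the plane $x_1Ox_{j+1}$, and at the end of each active phase rotates that solution exactly onto the coordinate axis $Ox_{j+1}$, where the coefficient $a_{j+1}=-\alpha_{j+1}$ keeps it decaying and, crucially, the perturbation entries $q_{1,i+1},q_{i+1,1}$ used for the other solutions cannot touch it. This parking step and the round-robin timing are the missing ideas; without them the induction over $j$ does not close.

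The exactness argument also has a gap. You claim $\varlimsup_{t\to\infty}\frac1t\ln|\det\Phi(t)|$ reduces to a trace expression that is positive for small $\beta$, as in \eqref{equ.58}. That works for $n=2$ because $\mathrm{tr}\,A=\pm(\alpha_2-\alpha_1)$, but in the $n$-dimensional construction $n-2$ of the diagonal entries sit at $-\alpha_i$ at any given time, so $\mathrm{tr}\,A$ is typically negative and the determinant's growth rate need not be positive; the naive inequality $\sum_i\lambda[X_i]\ge\varlimsup\frac1t\ln|\det\Phi(t)|$ then yields no contradiction. The paper instead evaluates the Liouville identity along the sparse subsequence $t_{k(l)}$, uses $t_{k(l)}/t_{k(l+1)}\to 0$ to make $|\det\Phi(t_{k(l)})|^{1/t_{k(l+1)}}\to 1$, and invokes the Assertion on the growth ratio $\Vert X_{p(l+1)}(t_{k(l+1)})\Vert/\Vert X_{p(l+1)}(t_{k(l)})\Vert$ of the currently active solution to force the contradiction $0>-\Lambda_1>0$. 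You would need this subsequence argument (or an equivalent) rather than a global trace positivity claim.
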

\par \begin{proof}
\textbf{(1)} First, we construct $n$-dimensional equation \eqref{equ.1} according to Theorem \ref{the.1}. For the sequence $\left\{k(l)\right\}$ of odd numbers with the property $\frac{k(l)}{k(l+1)}\rightarrow0$, as $l\rightarrow\infty$, we define numbers $T_{l}=t_{k(l)}$, $l\in \N$ based on the times $t_{k}=\theta^{k}$, $\theta>1$, $k\in \N$.  By the given exponents $\lambda_{i}>0$,  we use the similar method in  Theorem \ref{the.1} to get the
numbers $\alpha_{n}\geq\ldots\geq\alpha_{2}\geq\alpha_{1}, \beta$. With these numbers, we define the piecewise constant coefficients $a_{1}(t),a_{2}(t),\ldots,a_{n}(t),b(t)$ of the linear $n$-dimensional diagonal equation
\begin{equation}\label{equ.15}
\d X=\mathrm{diag}[a_{1}(t),\ldots,a_{n}(t)]X\d t+\mathrm{diag}[b(t),\ldots,b(t)]X\d W,\quad t\geq T_{1}
\end{equation}
as follows. The coefficients $a_{1}(t)$ and $b(t)$ will be defined  on the  interval $[T_{1},+\infty]$ by relations (\ref{equ.18}). The remaining coefficients $a_{i}(t)$ will be defined by the relations
\begin{center}
$	a_{i}(t)=-\alpha_{i}\times  \text{sgn } a_{1}(t),\quad t\in[T_{i-1+k(n-1)},T_{i+k(n-1)}],\quad k\in \N_{+},\quad i=2,\ldots,n,$\end{center}
and on other intervals in each period, by the relations
$$a_{i}=-\alpha_{i},\quad t\in[T_{i+k(n-1)},T_{i-1+(k+1)(n-1)}],\quad k\in \N_{+},\quad i=2,\ldots,n,$$
which ensure the inequalities $\Lambda_{i}+\alpha_{i}+\frac{1}{2}\beta^{2}>0$.
\par \textbf{(2)} On the interval $[T_{1},T_{2}]$,  using the constructions in the proof of Theorem \ref{the.1}, in the coordinate plane $x_{1}Ox_{2}$,  we obtain the first desired solution $X_{1}(t)=(x(t),0,\ldots,0)$ with $\Lambda_{1}$--exponentially decaying property where $x(t)$ is a 2-dimensional random vector. Next, we use an appropriate perturbation (see the proof of Theorem \ref{the.1}) on the interval $[T_{2}-1,T_{2}]$ such that the solution $X_{1}(t)$ achieves at the coordinate axis $Ox_{2}$ at $t=T_{2}$. Thereby, we obtain the expression $X_{1}(T_{2})=\Vert X_{1}(T_{2})\Vert \vec{e}_{2}$, where $\vec{e}_{i}$ is the $i$th coordinate vector of the $n$-dimensional space. Then, the solution will be determined by the relation
$$X_{1}(t)=\Vert X_{1}(T_{2})\Vert \vec{e}_2\cdot e^{-(\alpha_{2}+\frac{1}{2}\beta^{2})(t-T_{2})}e^{\beta[W(t)-W(T_{2})]},\quad  t\in[T_{2},T_{n}],$$
which will not be influenced by perturbations on $[T_{2},T_{n}]$.
\par The second solution $X_{2}(t)$ of equation (\ref{equ.15}) has the expression
$$X_{2}(t)=\vec{e}_{3}\cdot e^{-(\alpha_{2}+\frac{1}{2}\beta^{2})(t-T_{1})}e^{\beta[W(t)-W(T_{1})]},\quad t\in[T_1,T_2],$$
which is not affected by\label{key} perturbations on the interval $[T_{1},T_{2}]$. There are three procedures to construct the  solution on the interval $[T_{2},T_{3}]$. Firstly, on the interval $[T_{2},T_{2}+1]$, we form  the desired norm of $X_{2}(T_{2}+1)$ and angle  $\angle\left\{X_{2}(T_{2}+1),Ox_{3}\right\}$ by an appropriate perturbation  acting in the plane $x_{1}Ox_{3}$, which satisfy the similar condition \eqref{equ.61} of the induction. Secondly, in accordance with the constructions and arguments in the proof of Theorem \ref{the.1}, on the interval $[T_{2}+1,T_{3}-1]$, we obtain desired solution with $\Lambda_{2}$--exponentially decaying property. Finally, we add a perturbation on interval $[T_{3}-1,T_{3}]$ such that  the solution $X_{2}(t)$ achieves at the coordinate axis $Ox_{3}$ at $t=T_{3}$.
Notice that the solution will not be affected by other perturbations on $[T_{3},T_{n}]$. The other solutions has similar construction. In order to make it easier to understand, we give  perturbation $Q(t)$ on the interval $[T_{1},T_{3}]$ in 3-dimensional situation:
$$
Q(t)=\left(\begin{matrix}
	0 & q_{12}(t) & q_{13}(t) \\
	q_{21}(t) & 0 & 0 \\
	q_{31}(t) & 0 & 0
\end{matrix}\right),
$$
$$
	q_{13}(t)=q_{31}(t)=0,\quad t\in [T_{1},T_{2}], \quad	q_{12}(t)=q_{21}(t)=0,\quad t\in [T_{2},T_{3}].
$$
 $q_{12}$, $q_{21}$ on the interval $[T_{1},T_{2}]$ and  $q_{13}$, $q_{31}$  on the interval $[T_{2},T_{3}]$ have similar definition as the component of \eqref{equ.54} in Theorem \ref{the.1}. 
\par By induction, the equation (\ref{equ.2})  already has $n-1$
linearly independent solutions  with negative exponents. Now let us prove that it cannot have another solution $X_{n}(t)$  with  negative
exponent which is linearly independent with  solutions $X_{1},X_{2},\ldots,X_{n-1}$. Suppose the contrary -- equation \eqref{equ.2} has such a solution $X_{n}(t)$. Then there are $n$ linearly independent solutions with negative Lyapunov exponents. So according to the \textbf{Assertion}, we have

\begin{align*}
	0 & >\lambda[X_{1}]+\ldots+\lambda[X_{n}]\\
	&\geq\varlimsup\limits_{l\rightarrow\infty}\frac{1}{t_{k(l)}}\ln\Vert X_{1}(t_{k(l)})\Vert+\ldots+\varlimsup\limits_{l\rightarrow\infty}\frac{1}{t_{k(l+1)}}\ln\Vert X_{p(l+1)}(t_{k(l+1)})\Vert\\
	&\quad+\ldots+\varlimsup\limits_{l\rightarrow\infty}\frac{1}{t_{k(l)}}\ln\Vert X_{n}(t_{k(l)})\Vert\\
	&\geq\varlimsup\limits_{l\rightarrow\infty}\frac{1}{t_{k(l+1)}}\ln\big[\Vert  X_{p(l+1)}(t_{k(l+1)})\Vert\times\prod_{i\neq p(l+1)}\Vert X_{i}(t_{k(l)})\Vert\big]\\
	&\geq \varlimsup\limits_{l\rightarrow\infty}\frac{1}{t_{k(l+1)}}\ln\big[ \frac{\Vert X_{p(l+1)}(t_{k(l+1)})\Vert}{\Vert X_{p(l+1)}(t_{k(l)})\Vert}\times \vert \det \Phi(t_{k(l)})\vert\big]\\
	&\geq-\Lambda_{1}>0,
\end{align*}
where $p(l+1)\in\{1,...,n-1\}$ is the subscript of the exponentially increasing solution on the interval $[T_{l},T_{l+1}]$ in  Theorem \ref{the.1}  and $\Lambda_{1}$ is the smallest negative Lyapunov exponent.  The contradiction implies that the perturbed equation only has $n-1$ linearly independent solutions with negative Lyapunov exponents. And the positive Lyapunov exponent is greater than $\vert\sum_{i=1}^{n-1}\Lambda_{i}\vert$.
\par  At the end of proof, the similar method can be used to prove that the Lyapunov exponents are invariant after polishing the coefficient matrices $A(t)$ in the $n$-dimensional situation. The proof is complete.
\end{proof}
\begin{remark}\rm
We remark that there exist an equation \eqref{equ.1} which has $n-1$ linearly independent solutions with negative Lyapunov exponents and another solution with positive Lyapunov exponent.    The equation \eqref{equ.2} will have all positive Lyapunov exponents under an appropriate preturbation $Q(t)$.
\end{remark}

\subsection{The situation of all negative Lyapunov exponents}
\par For equations \eqref{equ.1} with all negative Lyapunov exponents, we  have the following theorem. Its linear ordinary differential equations version was proved by Izobov \cite{2019Construction}.
\par
\begin{theorem}\label{the.3}
	\begin{itemize}
      \par \item [(1)] For any constants $\lambda_{2}\leq\lambda_{1}<0$, $\theta>1$, there exists a $2$-dimensional equation \eqref{equ.1} with bounded $C^{\infty}$ coefficients and Lyapunov exponents $\lambda_{1}, \lambda_{2}$.
      \par \item [(2)] For the equation above, there exists $h:=h(\lambda_1,\lambda_2,\theta)$ such that for any $\sigma\in (0,h)$, there exists a $C^{\infty}$ exponentially decaying perturbation $Q(t)$ such that the perturbed equation \eqref{equ.2} has a unique solution with positive Lyapunov exponent.
	\end{itemize}
\end{theorem}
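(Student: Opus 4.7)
The plan is to mirror the construction of Theorem \ref{the.1}, inverting the role of the perturbation: whereas there the perturbation is used to \emph{collapse} the solution onto the currently decaying axis, here it will be used to \emph{seed} a small nonzero component on the axis that is about to grow, which is then amplified by the natural dynamics of the diagonal system. For part (1), I reuse the piecewise constant diagonal SDE \eqref{equ.4}--\eqref{equ.18} with $b(t)\equiv\beta$, now choosing $\alpha_{1},\alpha_{2},\beta$ so that formula \eqref{equ.13} yields the prescribed negative exponents $\lambda_{i}=\frac{\theta-1}{\theta+1}\alpha_{i}-\frac{1}{2}\beta^{2}<0$. Crucially, $\beta^{2}$ is taken large enough (concretely $\beta^{2}>(\theta+1)|\lambda_{2}|$) that additionally $\alpha_{i}>\frac{1}{2}\beta^{2}$: this guarantees that, although the unperturbed Lyapunov exponents are negative, the effective growth rate $\alpha_{i}-\frac{1}{2}\beta^{2}$ on the subintervals where $a_{i}(t)>0$ is strictly positive, leaving a margin for the perturbation to exploit.

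For part (2), I follow the pathwise strategy of Theorem \ref{the.1}: write $X(t)=Y(t)e^{\beta W(t)}$, so that $Y$ satisfies the ODE $\dot{Y}=[A(t)+Q(t)-\frac{1}{2}\beta^{2}I]Y$, and use $\beta W(t)/t\to 0$ a.s.\ to transfer conclusions from $Y$ to $X$. The perturbation $Q(t)$ is built from the Gelbaum--Olmsted template \eqref{equ.54}, \eqref{equ.60} with off-diagonal bumps $q_{ij}(t)=d_{k}e^{-\sigma t}$ concentrated near each switching time $t_{k}$; however, in contrast with \eqref{equ.44}, I now choose each $d_{k}$ so that the perturbation \emph{deposits} a component of size $\sim e^{-\sigma t_{k}}\Vert X(t_{k})\Vert$ onto the axis that is growing over the next long interval. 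On that next interval the deposited component is amplified by $e^{(\alpha_{2}-\frac{1}{2}\beta^{2})(t_{2k+1}-t_{2k})}$ while the previous dominant component is damped by $e^{-(\alpha_{1}+\frac{1}{2}\beta^{2})(t_{2k+1}-t_{2k})}$; iterating, a direct analogue of the estimates \eqref{equ.9}--\eqref{equ.12} produces a per-period logarithmic growth of the form $(\alpha_{1}-\frac{1}{2}\beta^{2})(t_{2k}-t_{2k-1})+(\alpha_{2}-\frac{1}{2}\beta^{2})(t_{2k+1}-t_{2k})-\sigma t_{2k}$. Dividing by $t_{2k+1}$ and passing to the limit, this quantity is strictly positive exactly when $\sigma<h(\lambda_{1},\lambda_{2},\theta):=\frac{\theta+1}{\theta}(\beta^{2}+\lambda_{1}+\theta\lambda_{2})$; this identifies $h$ explicitly and yields a solution $X_{0}$ with Lyapunov exponent $\Lambda_{0}>0$.

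Uniqueness of the positive-exponent solution is then inherited from the stochastic Liouville computation \eqref{equ.58}: for the coefficients above, $\varlimsup_{t\to\infty}\frac{1}{t}\ln|\det\Phi(t)|=(\alpha_{2}-\alpha_{1})\frac{\theta-1}{\theta+1}-\beta^{2}$, which is strictly negative once $\alpha_{1}\geq\alpha_{2}$ (equivalently $\lambda_{1}\geq\lambda_{2}$) and $\beta^{2}$ is chosen as above. Oseledets' theorem in dimension two then gives $\lambda_{\max}+\lambda_{\min}=\lambda[\det\Phi]<0$; combined with the constructed $\lambda_{\max}=\Lambda_{0}>0$, this forces $\lambda_{\min}<0$, so the positive-exponent direction is unique up to scalars. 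The piecewise constant coefficient $A$ is finally smoothed at its switching times $t_{k}$ via the Gelbaum--Olmsted function \eqref{equ.60}, exactly as at the end of the proof of Theorem \ref{the.1}.

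I expect the main obstacle to be the quantitative bookkeeping that controls the seeded component of size $e^{-\sigma t_{k}}$ through a full decay--then--amplify cycle: one must simultaneously ensure (a) negativity of the unperturbed Lyapunov exponents, (b) strict positivity of the effective growth rate $\alpha_{i}-\frac{1}{2}\beta^{2}$ on the growing intervals, and (c) that $\sigma$ is small enough for the amplification to beat both the residual decay and the It\^{o} drift. These three requirements jointly carve out the two-sided window $\frac{1}{2}\beta^{2}<\alpha_{i}<\frac{\beta^{2}(\theta+1)}{2(\theta-1)}$ and the explicit threshold $h(\lambda_{1},\lambda_{2},\theta)$; once this window has been identified, the rest of the argument proceeds by the same inductive scheme as in the proof of Theorem \ref{the.1} and constitutes the stochastic counterpart of Izobov's ODE anti-Perron construction \cite{2019Construction}.
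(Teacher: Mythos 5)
Your construction is a genuinely different route from the paper's. The paper does \emph{not} reuse the symmetric alternating system \eqref{equ.4}: it keeps $a_{1}(t)\equiv-\alpha_{1}$ constant (so $X_{1}(t)=e^{(-\alpha_{1}-\frac12\beta^{2})t+\beta W(t)}$ is a pure decaying ``reservoir'', untouched by a perturbation that stays strictly lower triangular for all time), lets only $a_{2}$ alternate on intervals of \emph{unequal} lengths $p_{1}(\theta^{k}-\theta^{k-1})$ and $p_{2}(\theta^{k}-\theta^{k-1})$ with $p_{1}<p_{2}$, and drives the angle to $\tan\gamma_{2k}=e^{\eta t_{2k}}$ with $\eta>\alpha_{1}+\frac12\beta^{2}$, so the growing solution is read off as $X_{2}(t_{2k})=e^{(\eta-\alpha_{1}-\frac12\beta^{2})t_{2k}+\beta W(t_{2k})}$ and $h=\frac{p_{1}(\theta-1)(\alpha_{1}+\alpha_{2})}{\theta(p_{1}+p_{2})}-\eta$. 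Your ``bounce between the two axes'' scheme is plausible for the \emph{existence} of a growing solution, but two quantitative points need care: you incur a seeding loss $e^{-\sigma t_{m}}$ at \emph{every} switching time, not once per period, which shrinks your threshold by exactly the factor $\theta/(\theta+1)$ you gained; and $h$ must be a function of $(\lambda_{1},\lambda_{2},\theta)$ alone, so you must first pin $\beta$ down in terms of the data before quoting $h=\frac{\theta+1}{\theta}(\beta^{2}+\lambda_{1}+\theta\lambda_{2})$.

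The genuine gap is the uniqueness argument. The identity $\lambda_{\max}+\lambda_{\min}=\lambda[\det\Phi]$ that you attribute to Oseledets is not available here: the coefficients are non-autonomous and non-stationary, $\frac1t\int_{1}^{t}\mathrm{tr}\,A\,\d\tau$ oscillates between $\pm\frac{\theta-1}{\theta+1}|\alpha_{1}-\alpha_{2}|$ and does not converge, and the only general tool (the one used in \eqref{equ.58}) is the one-sided Lyapunov inequality $\lambda[X_{0}]+\lambda[X_{1}]\geq\varlimsup_{t\to\infty}\frac1t\ln|\det\Phi(t)|$. With your large $\beta$ the right-hand side is negative, and a lower bound on the sum of exponents cannot rule out two independent solutions with positive exponents — the inequality points the wrong way for this purpose (it is the right way in Theorem \ref{the.1}, where two \emph{negative} exponents contradict a \emph{nonnegative} determinant exponent). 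The paper's construction sidesteps this because its perturbation has only $q_{21}\neq0$ for all $t$, so the axis $Ox_{2}$ carries an invariant solution with exponent $\lambda_{2}<0$, and every solution off that line inherits the positive exponent; in your symmetric scheme the perturbation must alternate between $q_{12}$ and $q_{21}$, no coordinate axis is invariant, and you never exhibit any decaying solution of the perturbed equation. You either need to restructure the construction so that one direction is manifestly invariant and decaying, or supply a separate direct estimate showing that some solution of the perturbed system has non-positive exponent.
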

\begin{proof}
	\textbf{(1)}
	For any $\theta>1$, there exist $p_{2}>p_{1}>0$ which will be given explicitly later. We set $t_{0}=p_{1}+p_{2}$ and denote the length of interval $[t_{2k-2},t_{2k-1}]$ by $p_{2}(\theta^{k}-\theta^{k-1})$ and that of interval $[t_{2k-1},t_{2k}]$ by $p_{1}(\theta^{k}-\theta^{k-1})$. Then we get $t_{2k}=(p_{1}+p_{2})\theta^{k}$. Finally, let us consider the following 2-dimensional equation with deterministic  initial value $X_{0}=(1,1)$
	\begin{equation}\label{equ.20}
    \d X=\mathrm{diag}[a_{1}(t),a_{2}(t)]X\d t+\mathrm{diag}[b(t),b(t)]X\d W, \quad t\geq t_{0},
	\end{equation}
where  the coefficients are defined as follows:
	$$
	a_{1}(t)=-\alpha_{1}, \quad a_{2}(t)= \left\{
	\begin{aligned}
		&-\alpha_{2}, \quad&t\in[t_{2k-2},t_{2k-1}),\\
		&\alpha_{2}, \quad&t\in[t_{2k-1},t_{2k}),
	\end{aligned}
	\right.
	\quad b(t)=\beta.$$
It is immediate to find that the Lyapunov exponents of equation \eqref{equ.20} are $-\alpha_{1}-\frac{1}{2}\beta^{2}$ and $\frac{p_{1}-p_{2}}{p_{1}+p_{2}}\alpha_{2}-\frac{1}{2}\beta^{2}$. For the given $\lambda_{1},\lambda_{2}$, we choose appropriate positive constants $p_1,p_2,\alpha_{1},\alpha_{2},\beta$ such that  $$\lambda_{1}=-\alpha_{1}-\frac{1}{2}\beta^{2},\quad \lambda_{2}=\frac{p_{1}-p_{2}}{p_{1}+p_{2}}\alpha_{2}-\frac{1}{2}\beta^{2},$$ 
\begin{equation}\label{equ.55}
\frac{p_2-p_1}{p_1}<\frac{\alpha_1+\alpha_{2}}{\theta\alpha_2},\quad\dfrac{(\theta-1)p_{1}\alpha_{1}}{\theta(p_{2}-p_{1})}>\alpha_{1}+\dfrac{1}{2}\beta^{2},
\end{equation}
by taking $p_{2}-p_{1}$ small enough and $\alpha_{2}\gg p_{1}$ with $p_{1}$ big enough.
Due to $\lambda_{2}\leq\lambda_{1}$, we get the inequality
\begin{equation}\label{equ.56}
\frac{\alpha_{2}}{p_{1}+p_{2}}>\frac{\alpha_{1}}{p_{2}-p_{1}},
\end{equation}
which is useful later.

\par \textbf{(2)} According to the exponentially decaying property, we  can choose an appropriate perturbation on the interval $[t_{0},t_{2k-2}]$ such that the angle $\gamma_{2k-2}:=\angle \left\{X(t_{2k-2}), Ox_{1}\right\}$  satisfies the condition
$$
\tan\gamma_{2k-2}=e^{\eta t_{2k-2}}, \quad \eta=\theta\sigma_{1}-(\alpha_{1}+\frac{p_{1}-p_{2}}{p_{1}+p_{2}}\alpha_{2})(\theta-1)>0,
$$
where  $\sigma_{1}>0$.  It's obvious that $\eta>\sigma_{1}$. The lower bound of $\eta$ with respect to $\sigma_{1}$ is $-(\alpha_{1}+\frac{p_{1}-p_{2}}{p_{1}+p_{2}}\alpha_{2})(\theta-1)$
satisfying
$$-(\alpha_{1}+\frac{p_{1}-p_{2}}{p_{1}+p_{2}}\alpha_{2})(\theta-1)<\frac{p_{1}(\theta-1)(\alpha_{1}+\alpha_{2})}{\theta(p_{1}+p_{2})},$$
due to the first inequality of \eqref{equ.55}. This allows us to take an  appropriate $\sigma_{1}$ such that $\eta\in(\alpha_{1}+\frac{1}{2}\beta^{2},\frac{p_{1}(\theta-1)(\alpha_{1}+\alpha_{2})}{\theta(p_{1}+p_{2})})$ where
$$ \alpha_{1}+\frac{1}{2}\beta^{2}<\frac{p_{1}(\theta-1)(\alpha_{1}+\alpha_{2})}{\theta(p_{1}+p_{2})} $$
are guaranteed by the second inequality of \eqref{equ.55} and \eqref{equ.56}. In conclusion, we get an appropriate value of $\eta$.

\par  We add a perturbation $Q(t)$ which is similar with the one in Theorem \ref{the.1} to equation \eqref{equ.20} on the interval $[t_{2k-2},t_{2k-1})$. The perturbation is defined as follows:
	$$q_{11}(t)=q_{12}(t)=q_{22}(t)=0,\quad t\in[t_{2k-2},t_{2k-1}),$$
	$$	q_{21}(t)=d_{2k}e^{-\sigma t}\left\{
	\begin{aligned}
		&0,   &t\in[t_{2k-2}, \tau_{1}),\\
		&e_{01}(t, \tau_{1},\tau_{2}),  &t\in[\tau_{1}, \tau_{2}),\\
		&1,   &t\in[\tau_{2}, \tau_{3}),\\
		&e_{10}(t, \tau_{3},\tau_{4}),  &t\in[\tau_{3}, \tau_{4}),\\
	\end{aligned}
	\right.$$
	$$\epsilon(t)=e^{-t^{2}},\quad \tau_{1}=t_{2k-1}-1-2\epsilon(t_{2k}), \quad \tau_{2}=\tau_{1}+\epsilon(t_{2k}),  $$
	$$\tau_{3}=\tau_{2}+1, \quad  \tau_{4}=\tau_{3}+\epsilon(t_{2k})=t_{2k-1}.$$
On the next interval $[t_{2k-1},t_{2k})$, 	$Q(t)=(q_{ij}(t))=0$.

By the same way, the components of  solution $X(t)=(X_{1}(t),X_{2}(t))$ of the perturbed equation \eqref{equ.2} can be expressed as follows:
\begin{align*}
		&X_{1}(t)=X_{1}(t_{2k-2})e^{(-\alpha_{1}-\frac{1}{2}\beta^{2})(t-t_{2k-2})+\beta[W(t)-W(t_{2k-2})]}, &t\in [t_{2k-2},t_{2k}),\\
		&X_{2}(t)=X_{2}(t_{2k-2})e^{(-\alpha_{2}-\frac{1}{2}\beta^{2})(t-t_{2k-2})+\beta[W(t)-W(t_{2k-2})]}, &t\in
			[t_{2k-2},\tau_{1}],\\ &X_{2}(t)=e^{(-\alpha_{2}-\frac{1}{2}\beta^{2})(t-\tau_{1})+\beta[W(t)-W(\tau_{1})]}\cdot\Big[X_{2}(\tau_{1})\\
			&\qquad +\int_{\tau_{1}}^{t} q_{21}(\tau) X_{1}(\tau)e^{(\alpha_{2}+\frac{1}{2}\beta^{2})(\tau-\tau_{1})-\beta[W(\tau)-W(\tau_{1})]}\d\tau \Big],  &t\in [\tau_{1},t_{2k-1}],\\
			&X_{2}(t)=X_{2}(t_{2k-1})e^{(\alpha_{2}-\frac{1}{2}\beta^{2})(t -t_{2k-1})+\beta[W(t)-W(t_{2k-1})]},  &t\in [t_{2k-1},t_{2k}).
\end{align*}
If $d_{2k}=0$, the angle $\gamma_{2k}$ has the following formula by equation \eqref{equ.20}:
\begin{align*}
		\tan\gamma_{2k}&=\frac{X_{2}(t_{2k})}{X_{1}(t_{2k})}
		=e^{\eta t_{2k-2}}e^{\alpha_{1}(t_{2k}-t_{2k-2})}e^{\alpha_{2}\frac{p_{1}-p_{2}}{p_{1}+p_{2}}(t_{2k}-t_{2k-2})}
		=e^{\sigma_{1} t_{2k}}.
	\end{align*}
If $d_{2k}=d>0$, it is not difficult to get the following estimate from Theorem \ref{the.1}:
	\begin{equation}\label{equ.23}
		\begin{aligned}
			\tan \gamma_{2k}&=\frac{X_{2}(t_{2k})}{X_{1}(t_{2k})}\\
			&=\frac{X_{2}(\tau_{1})+\int_{\tau_{1}}^{t_{2k-1}} q_{21}(\tau)X_{1}(\tau)e^{(\alpha_{2}+\frac{1}{2}\beta^{2})
					(\tau-\tau_{1})-\beta[W(\tau)-W(\tau_{1})]}\d \tau }{X_{1}(\tau_{1})} \\
			&\quad\cdot Ce^{(\alpha_{1}+\alpha_{2})(t_{2k}-t_{2k-1})}\\
			&\geq Ce^{(\alpha_{1}+\alpha_{2})(t_{2k}-t_{2k-1})}\big[\frac{X_{2}(\tau_{1})}{X_{1}(\tau_{1})}+\int_{\tau_{2}}^{\tau_{3}}de^{-\sigma \tau}e^{(\alpha_{2}-\alpha_{1})(\tau-\tau_{1})}\d \tau\big]\\
			&\geq Ce^{(\alpha_{1}+\alpha_{2})(t_{2k}-t_{2k-1})} \frac{e^{(\alpha_2-\alpha_1-\sigma)(\tau_{3}-\tau_{1})}-e^{(\alpha_2-\alpha_1-\sigma)(\tau_{2}-\tau_{1})}}{\alpha_2-\alpha_1-\sigma}de^{-\sigma\tau_{1}}\\
			&\geq e^{\eta t_{2k}},
	\end{aligned}
\end{equation}
	where $\sigma\in(0,\frac{p_{1}(\theta-1)(\alpha_{1}+\alpha_{2})}{\theta(p_{1}+p_{2})})-\eta)$ and $C>0$ are constants. We denote $\frac{p_{1}(\theta-1)(\alpha_{1}+\alpha_{2})}{\theta(p_{1}+p_{2})})-\eta$ by $h(\lambda_1,\lambda_2,\theta)$. By the continuous dependence of $\tan \gamma_{2k}$ on $d_{2k}$, we  choose a constant $d_{2k}\leq d$  such that the angle satisfies $\tan \gamma_{2k}=e^{\eta t_{2k}}$. By the similar method in Theorem \ref{the.1}, we find that for any $\epsilon>0$ and almost all $\omega$, there exists a $k(\omega)$ such that for any $k>k(\omega)$, we have the estimate
		$$t^{-1}\ln X(t)\leq t_{2k}^{-1}\ln \Vert X(t_{2k})\Vert+\epsilon,\quad t\in[t_{2k-2},t_{2k}).$$
	And we get the following formula
$$X_{2}(t_{2k})=\tan \gamma_{2k}\cdot X_{1}(t_{2k})=e^{(\eta-\alpha_{1}-\frac{1}{2}\beta^{2})t_{2k}+\beta W(t_{2k})},$$
	which implies that the Lyapunov exponent of solution $X(t)$ is $\eta-\alpha_{1}-\frac{1}{2}\beta^{2}>0$.
	Polishing the coefficients and the proof of uniqueness of the solution can be found in the proof of Theorem \ref{the.1}. 
\end{proof}
\subsection{The second form of perturbation}
In this subsection, we discuss the second situation that the perturbation appears at the inhomogeneous term. The perturbed equation can be expressed by  equation (\ref{equ.50}), which will have the similar conclusion about Lyapunov exponents. It is shown by same techniques. And $\textbf{Q}$, $\textbf{q}$ mean that they are random.
\begin{theorem}\label{the.7}
	\begin{itemize}
      \item [(1)] For any constants $\lambda_{2}\geq\lambda_{1}>0$, $\theta>1$, there exists a $2$-dimensional equation \eqref{equ.1} with bounded $C^{\infty}$ coefficients and Lyapunov exponents $\lambda_{1},\lambda_{2}$.
      \item [(2)] For this equation and any constant  $\sigma\in(0,\frac{(\theta-1)\lambda_{1}}{\theta})$, there exists a random perturbation $\textbf{Q}$ satisfying that for almost all $\omega$, $\textbf{Q}(t,\omega)$ is exponentially decaying, such that the perturbed equation \eqref{equ.50} has a  solution with negative Lyapunov exponent $\lambda_{0}(\omega)\leq\lambda(\theta,\lambda_{1},\sigma)<0$ where $\lambda(\theta,\lambda_{1},\sigma)$ will be given explicitly below.
		\end{itemize}
\end{theorem}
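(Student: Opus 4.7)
Part (1) is identical to Theorem \ref{the.1}(1): the 2-dimensional diagonal equation \eqref{equ.4} with the same piecewise-constant coefficients and the same choice of $\alpha_{1},\alpha_{2},\beta$ produces an SDE whose Lyapunov exponents are $\lambda_{1},\lambda_{2}$. So I focus on part (2), and the plan is to convert the multiplicative matrix perturbation of Theorem \ref{the.1}(2) into an additive inhomogeneous one by evaluating it along a specific trajectory.

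Concretely, let $Q(t)$ be the deterministic $C^{\infty}$ matrix perturbation supplied by Theorem \ref{the.1}(2), and let $X_{0}(\cdot,\omega)$ be the pathwise solution of the perturbed equation \eqref{equ.2} with initial condition $X_{0}(1)=(1,1)^{\top}$, whose Lyapunov exponent satisfies $\lambda_{0}(\omega)\leq\lambda(\theta,\lambda_{1},\sigma)<0$ almost surely. Define the random inhomogeneous perturbation
\begin{equation*}
\mathbf{Q}(t,\omega):=Q(t)\,X_{0}(t,\omega).
\end{equation*}
Since $X_{0}$ is a strong solution of an SDE driven by the same Brownian motion $W$, it is continuous and adapted, hence $\mathbf{Q}$ is an admissible pathwise drift for \eqref{equ.50}. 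Direct substitution shows that $X_{0}(t,\omega)$ solves \eqref{equ.50} with this $\mathbf{Q}$, because
\begin{equation*}
A(t)X_{0}\d t+\mathbf{Q}(t,\omega)\d t+F(t)X_{0}\d W=[A(t)+Q(t)]X_{0}\d t+F(t)X_{0}\d W,
\end{equation*}
which is precisely equation \eqref{equ.2} satisfied by $X_{0}$; consequently the Lyapunov-exponent estimate of Theorem \ref{the.1}(2) transfers verbatim, producing a solution of \eqref{equ.50} with exponent at most $\lambda(\theta,\lambda_{1},\sigma)$.

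It then remains to verify the almost sure exponential decay of $\mathbf{Q}(t,\omega)$. From $\Vert\mathbf{Q}(t,\omega)\Vert\leq\Vert Q(t)\Vert\cdot\Vert X_{0}(t,\omega)\Vert$, the bound $\Vert Q(t)\Vert\leq Ce^{-\sigma t}$ furnished by Theorem \ref{the.1}(2), and the fact that for almost every $\omega$ and any sufficiently small $\epsilon>0$ there exists $T(\omega)$ with $\Vert X_{0}(t,\omega)\Vert\leq e^{(\lambda_{0}+\epsilon)t}$ for $t>T(\omega)$, one obtains $\Vert\mathbf{Q}(t,\omega)\Vert\leq Ce^{(-\sigma+\lambda_{0}+\epsilon)t}\leq Ce^{-\sigma t}$ for $t>T(\omega)$ as soon as $\epsilon<|\lambda_{0}|$, which is the decay condition stated after \eqref{equ.52}.

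The main obstacle is conceptual rather than technical: the delicate construction (tuning the scalars $d_{2k}$, $C^{\infty}$ polishing via the Gelbaum--Olmsted function \eqref{equ.60}, and the angle bookkeeping at the breakpoints $t_{k}$) is already packaged inside Theorem \ref{the.1}. The new point is that \emph{allowing the perturbation to be random} is exactly what converts the multiplicative matrix perturbation into an equivalent inhomogeneous one while preserving the sign-change effect on the Lyapunov exponent; the only new verifications -- adaptedness of $\mathbf{Q}$ and its exponential decay -- are both immediate, the first from strong-solution theory and the second from the negative exponent of $X_{0}$ combined with the decay of $Q$.
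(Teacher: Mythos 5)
Your argument is correct for the statement as written, but it takes a genuinely different route from the paper. You reduce Theorem \ref{the.7} to Theorem \ref{the.1} by setting $\mathbf{Q}(t,\omega):=Q(t)X_{0}(t,\omega)$, where $Q$ is the matrix perturbation of Theorem \ref{the.1} and $X_{0}$ is its decaying solution of \eqref{equ.2}; then $X_{0}$ tautologically solves \eqref{equ.50}, the exponent bound transfers verbatim, and $\Vert\mathbf{Q}(t,\omega)\Vert\leq Ce^{(-\sigma+\xi_{2}+\epsilon)t}$ decays even faster than $e^{-\sigma t}$. The paper instead redoes the whole induction from scratch: it takes a vector perturbation $(0,\mathbf{q}_{2})^{\top}$ with $\mathbf{q}_{2}$ shaped as in \eqref{equ.54}, rewrites the variation-of-constants formula, and re-estimates the angle $\gamma_{2k}$, now having to absorb the extra factor $e^{-\beta[W(\tau)-W(\tau_{1})]}$ in the integral \eqref{equ.8} via the bound $\vert W(t)\vert<ct$ (whence the random amplitude $d_{2k}(\omega)$, the constraint $\sigma+\beta<\alpha_{1}+\alpha_{2}$, and a perturbation decaying only at rate $\sigma-c$). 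Your reduction is shorter, gives a better decay rate, and produces a cleanly adapted perturbation. What it gives up is regularity: $\mathbf{Q}(\cdot,\omega)=Q(\cdot)X_{0}(\cdot,\omega)$ inherits the roughness of the Brownian path and is therefore only H\"older continuous in $t$, whereas the paper's perturbation is pathwise (piecewise) $C^{\infty}$ and gets polished, in keeping with the $C^{\infty}$ requirement imposed explicitly in the companion results (Theorems \ref{the.5} and \ref{the.6}). Since the statement of Theorem \ref{the.7} does not demand smoothness of $\mathbf{Q}$, your proof establishes the claim as stated; if $C^{\infty}$ dependence on $t$ is intended, you would need either a mollification step or the paper's direct construction. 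One could also object that defining the perturbation through a solution of another perturbed equation is less constructive, but for a pure existence statement this is immaterial.
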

\begin{proof}
The proof of the first part is similar to Theorem \ref{the.1}. And we adopt the same notations. We prove the second  part directly. The perturbation in this theorem is a vector which is defined as follows:
$$\textbf{Q}(t)=\begin{pmatrix}
0\\
	\textbf{q}_{2}(t)
\end{pmatrix},
$$ where $\textbf{q}_{2}$ is similar with \eqref{equ.54}. Given a constant $c>0$, for almost all $\omega$, there exists a $k(\omega)$ such that $\vert\frac{W(t,\omega)}{t}\vert<c$ when $t>t_{2k(\omega)-1}$. We omit $\omega$ in the proof to simplify the notation.
Assume that the norm of   solution $X(t)$ at time $t_{2k-1}$ and the angle $\gamma_{2k-1}:=\angle \left\{X(t_{2k-1}), Ox_{2}\right\}$ satisfy the conditions
	\begin{equation}\label{equ.47}
		\begin{aligned}
    &e^{\xi_{1}t_{2k-1}}e^{\beta W(t_{2k-1})}\leq\Vert X(t_{2k-1})\Vert \leq e^{\xi_{2}t_{2k-1}}e^{\beta W(t_{2k-1})},\\
	&\tan\gamma_{2k-1}=e^{-\eta t_{2k-1}}, \quad \eta=-\theta\sigma+(\theta-1)(\alpha_{1}+\alpha_{2})>0.
		\end{aligned}
	\end{equation}

 The perturbed equation \eqref{equ.50}  can be expressed on the interval $[t_{2k-1},t_{2k})$ by two equations
$$\d X_{1}=\alpha_{1}X_{1}\d t+\beta X_{1}\d W,\quad \d X_{2}=[-\alpha_{2}X_{2}+\mathbf{q}_{2}(t)]\d t+\beta X_{2}\d W.$$
	After integrating the equations, we obtain the following expressions:
$$
		\begin{aligned}
		X_{1}(t)&=X_{1}(t_{2k-1})e^{(\alpha_{1}-\frac{1}{2}\beta^{2})(t-t_{2k-1})+\beta[W(t)-W(t_{2k-1})]}   ,\quad t\in [t_{2k-1},t_{2k}),\\
		X_{2}(t)&=X_{2}(t_{2k-1})e^{(-\alpha_{2}-\frac{1}{2}\beta^{2})(t-t_{2k-1})+\beta[W(t)-W(t_{2k-1})]}   ,\quad t\in [t_{2k-1},\tau_{1}],\\
	X_{2}(t)&=e^{(-\alpha_{2}-\frac{1}{2}\beta^{2})(t-\tau_{1})+\beta[W(t)-W(\tau_{1})]} \\
		\cdot\big[	&X_{2}(\tau_{1})+\int_{\tau_{1}}^{t} \mathbf{q}_{2}(\tau)e^{(\alpha_{2}+\frac{1}{2}\beta^{2})(\tau-\tau_{1})-\beta[W(\tau )-W(\tau_{1})]}\d \tau \big] ,\quad t\in [\tau_{1},t_{2k}).
		\end{aligned}$$
	\par If $d_{2k}=0$, we have
\begin{equation}\label{equ.45}
		\begin{aligned}
			\tan \gamma_{2k}=\frac{X_{2}(t_{2k})}{X_{1}(t_{2k})}=\cot \gamma_{2k-1}e^{-(\alpha_{1}+\alpha_{2})(t_{2k}-t_{2k-1})}=e^{-\sigma t_{2k}}.
		\end{aligned}
\end{equation}

By the similar method, for $d_{2k}=-d<0$, we have
\begin{equation}\label{equ.41}
\begin{aligned}
\tan \gamma_{2k}=&\frac{X_{2}(t_{2k})}{X_{1}(t_{2k})}\\
=&\frac{X_{2}(\tau_{1})+\int_{\tau_{1}}^{t_{2k}} \mathbf{q}_{2}(\tau)e^{(\alpha_{2}+\frac{1}{2}\beta^{2})(\tau-\tau_{1})-\beta[W(\tau)-W(\tau_{1})]}\d \tau }{X_{1}(\tau_{1})} \\
&\cdot e^{(\alpha_{2}+\frac{1}{2}\beta^{2})(t_{2k}-\tau_{1})-\beta[W(t_{2k})-W(\tau_{1})]} \\
&\cdot e^{(\alpha_{1}-\frac{1}{2}\beta^{2})(t_{2k}-\tau_{1})+\beta[W(t_{2k})-W(\tau_{1})]}.
\end{aligned}
\end{equation}
So we only consider whether the first  part of   formula \eqref{equ.41} will vanish at time $t=t_{2k}$. We can suppose that $0<X_{1}(\tau_{1})<1$.
Then
\begin{equation}\label{equ.8}
		\begin{aligned}
			&\frac{X_{2}(\tau_{1})+\int_{\tau_{1}}^{t_{2k}} \mathbf{q}_{2}(\tau)e^{(\alpha_{2}+\frac{1}{2}\beta^{2})(\tau-\tau_{1})-\beta[W(\tau)-W(\tau_{1})]}\d \tau }{X_{1}(\tau_{1})}\\
			\leq&
			 \frac{X_{2}(\tau_{1})}{X_{1}(\tau_{1})}-d\int_{\tau_{1}}^{t_{2k}}e^{-\sigma \tau+(\alpha_{2}+\frac{1}{2}\beta^{2})(\tau-\tau_{1})} \cdot e^{-\beta[W(\tau)-W(\tau_{1})]}\d \tau +C_{1}\\
			\leq& \cot \gamma_{2k-1}e^{-(\alpha_{1}+\alpha_{2})(\tau_{1}-t_{2k-1})}-d\int_{\tau_{1}}^{t_{2k}}e^{-\sigma \tau+(\alpha_{2}+\frac{1}{2}\beta^{2})(\tau-\tau_{1})} \cdot e^{-\beta[W(\tau)-W(\tau_{1})]}\d \tau+C_{1}\\
			\leq& e^{-\sigma t_{2k}}e^{(\alpha_{1}+\alpha_{2})(t_{2k}-\tau_{1})}-d\int_{\tau_{1}}^{t_{2k}}e^{-\sigma \tau+(\alpha_{2}+\frac{1}{2}\beta^{2})(\tau-\tau_{1})} \cdot e^{-c \tau}\d \tau+C_{1}\\
			\leq& e^{-\sigma t_{2k}}e^{(\alpha_{1}+\alpha_{2})(t_{2k}-\tau_{1})}-de^{-\sigma\tau_{1}-c\tau_{1}}+C_{1}\\
		\leq& 0,\quad \\
		\end{aligned}
\end{equation}
for $d=[C+C_{1}e^{\sigma\tau_{1}}]e^{c \tau_{1}}$ where $C$ is a constant and  $C_{1}$ can be expressed by the integral
	\begin{equation}\label{equ.63}
\big(\int_{\tau_{1}}^{\tau_{2}}+\int_{\tau_{3}}^{\tau_{4}}\big) [de^{-\sigma\tau}+\mathbf{q}_{2}(\tau)]e^{(\alpha_{2}+\frac{1}{2}\beta^{2})(\tau-\tau_{1})-\beta[W(\tau)-W(\tau_{1})]}\d \tau,
	\end{equation}
	 but it has no effect on exponentially decaying property of $\mathbf{q}_{2}$. The third inequality in \eqref{equ.8} holds by  the estimate $\sigma+\beta<\alpha_{1}+\alpha_{2}$ and $\lim\limits_{t\rightarrow\infty}\frac{W(t)}{t}=0$  $a.s.$  Therefore, for almost all $\omega$, by the continuous dependence of the tangent of the angle $\gamma_{2k}$ on  $d_{2k}$ and  estimates \eqref{equ.45} and \eqref{equ.8}, there exists a value $d_{2k}(\omega)\leq d$ such that $\tan\gamma_{2k}=e^{-\eta t_{2k}}$ holds. We can find that  the rate of $\textbf{Q}(t,\omega)$ is  $c-\sigma$. Let $c$  be small enough such that  $\textbf{Q}(t,\omega)$ still exponentially decays for almost all $\omega$. It is not difficult to find that we can choose an appropriate $\hat{d}_{2k}(\omega)$ to let this angle vanish on $t_{2k}$ which will be used in the proof of the $n$-dimensional situation.

Finally, it is not difficult to show that the norm of the solution $X(t)$ of the perturbed equation \eqref{equ.50} meets  requirements \eqref{equ.47} on the interval $[t_{2k-1},t_{2k})$.
By induction we extend these construction of the equation and the perturbation to the interval $[t_{0},+\infty)$. And by polishing the coefficients, we get the $C^{\infty}$ equations. The proof is completed.
\end{proof}
\begin{remark}\rm
The uniqueness of the solution with negative Lyapunov exponent of perturbed equation \eqref{equ.50} cannot be proved by the same method above, because the condition of Liouville's theorem  \cite{Vrko1978Liouville} is not met.
\end{remark}
\begin{remark}\rm
The results of Theorem \ref{the.3} and Theorem \ref{the.7} are also true for $n$--dimensional situation.  The proof of these results is similar with that of Theorem \ref{the.3} and Theorem \ref{the.7}. So there is no more details.
\end{remark}
\section{Perturbations at diffusion term}
The conclusion not only holds  for the two kinds of perturbations above but also holds for other two kinds of perturbations at the diffusion term. The original equation is still equation \eqref{equ.1}. The first perturbation is expressed by the perturbed equation
\begin{equation}\label{equ.27}
	\d X=A(t)X\d t+[F(t)+Q(t)]X\d W
\end{equation}
 and the second perturbation is expressed by the following equation
\begin{equation}\label{equ.46}
	\d X=A(t)X\d t+[F(t)X+Q(t)]\d W,
\end{equation}
where $W$ is a one dimensional Brownian motion, and $A$ and $F$ are bounded and $C^{\infty}$ functions.
\subsection{The perturbation at homogeneous term}
The perturbed equation \eqref{equ.27} is similar to equation \eqref{equ.2}. Accordingly, the conclusion can also be established with respect to it.
 \begin{theorem}\label{the.5}
 	\begin{itemize}
      \item [(1)] For any constants $\lambda_{2}\geq\lambda_{1}>0$, $\theta>1$, there exists a $2$-dimensional equation \eqref{equ.1} with bounded $C^{\infty}$ coefficients and Lyapunov exponents $\lambda_{1}, \lambda_{2}$.
      \par \item [(2)] For this equation and any constant $\sigma\in(0,\frac{(\theta-1)\lambda_{1}}{\theta})$, there exists a $C^{\infty}$ random  perturbation $\textbf{Q}$ satisfying that for almost all $\omega$,  $\textbf{Q}(t,\omega)$ is exponentially decaying, such that the perturbed equation \eqref{equ.27} has a unique solution $X_0$  with negative Lyapunov exponent $\lambda_{0}(\omega)\leq\lambda(\theta,\lambda_{1},\sigma)<0$ where $\lambda(\theta,\lambda_{1},\sigma)$ will be given explicitly below;
       the Lyapunov exponent of the other solution $X_{1}$ which is linearly independent of $X_{0}$ is positive and is greater than $\vert\lambda_{0}(\omega)\vert$.
      \end{itemize}
\end{theorem}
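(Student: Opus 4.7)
The plan is to transplant the strategy of Theorem \ref{the.1} and its random-perturbation variant Theorem \ref{the.7} to the case where the perturbation lies inside the stochastic integral. For part (1), I would reuse verbatim the base diagonal equation \eqref{equ.4} with the piecewise-constant coefficients \eqref{equ.18} on the partition $t_k=\theta^k$; the Lyapunov exponents are still $\frac{\theta-1}{\theta+1}\alpha_i-\frac{1}{2}\beta^{2}$, so $\alpha_1,\alpha_2,\beta$ can be tuned to match $\lambda_1,\lambda_2$ exactly as before.

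For part (2), $\mathbf{Q}$ would be off-diagonal, with $(2,1)$-entry on $[t_{2k-1},t_{2k})$ built from the Gelbaum--Olmsted template of \eqref{equ.54} but with an $\omega$-dependent amplitude $d_{2k}(\omega)$, and a symmetric $(1,2)$-entry on $[t_{2k},t_{2k+1})$. Because the perturbation only affects the diffusion, the first component $X_1$ still has the unperturbed closed form, while $X_2$ solves the scalar linear SDE
$$
\d X_2=-\alpha_2 X_2\,\d t+\beta X_2\,\d W+\mathbf{q}_{21}(t)X_1\,\d W.
$$
Multiplying by the integrating factor $\Psi(t)=\exp((-\alpha_2-\tfrac{1}{2}\beta^{2})(t-\tau_1)+\beta[W(t)-W(\tau_1)])$ and applying It\^o's product rule, I obtain
$$
X_2(t)=\Psi(t)\Bigl[\Psi(\tau_1)^{-1}X_2(\tau_1)+\int_{\tau_1}^{t}\Psi(\tau)^{-1}\mathbf{q}_{21}(\tau)X_1(\tau)\bigl(\d W(\tau)-\beta\,\d\tau\bigr)\Bigr].
$$

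From here the argument parallels Theorem \ref{the.1}: the $\Psi$-factors and the closed form of $X_1$ combine so that the stochastic exponentials in $\tan\gamma_{2k}=X_2(t_{2k})/X_1(t_{2k})$ cancel, leaving a deterministic prefactor analogous to the one in \eqref{equ.7}. As $d_{2k}$ moves away from $0$, $\tan\gamma_{2k}$ passes continuously from the unperturbed value $e^{-\sigma t_{2k}}$ through $0$; the intermediate value theorem applied pathwise then yields $d_{2k}(\omega)$ with $\tan\gamma_{2k}=e^{-\eta t_{2k}}$. The two-sided bound $e^{\xi_1 t_{2k}+\beta W(t_{2k})}\le\|X(t_{2k})\|\le e^{\xi_2 t_{2k}+\beta W(t_{2k})}$ is then propagated exactly as in \eqref{equ.9}--\eqref{equ.12}, using only $W(t)/t\to 0$ a.s. For uniqueness of the negative-exponent solution, \eqref{equ.58} applies with minor modification: since $F=\beta I$ and $\mathbf{Q}$ is off-diagonal, $\mathrm{tr}\,(F+\mathbf{Q})^{2}=\mathrm{tr}\,F^{2}+\mathrm{tr}\,\mathbf{Q}^{2}\ge\mathrm{tr}\,F^{2}$, so Liouville's formula and Lyapunov's inequality still force $\lambda[X_0]+\lambda[X_1]\ge(\alpha_2-\alpha_1)\frac{\theta-1}{\theta+1}-\beta^{2}\ge 0$ for $\beta$ small enough, which both rules out a second negative exponent and gives $\lambda[X_1]>|\lambda_0(\omega)|$.

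The main obstacle is the It\^o integral $\int_{\tau_1}^{t}\Psi^{-1}\mathbf{q}_{21}X_1\,\d W$, which unlike its Lebesgue counterpart in \eqref{equ.44} need not have a definite sign along a typical path; this is precisely what forces $\mathbf{Q}$ to be genuinely random and prevents a direct appeal to the proof of Theorem \ref{the.1}. My plan is to take $\mathbf{q}_{21}$ piecewise constant in $\tau$ on the active window $[\tau_2,\tau_3]$, so that the stochastic integral becomes an explicit linear combination of the Brownian increment $W(\tau_3)-W(\tau_2)$ and a boundary contribution, and then choose $d_{2k}(\omega)$ by the intermediate value theorem on each path. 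The a.s.\ bound $|W(t)|=o(t)$ ensures that $|d_{2k}(\omega)|$ stays sub-exponential in $t_{2k}$, so that $\mathbf{Q}(t,\omega)$ retains exponential decay at rate $\sigma-\varepsilon$ for every $\varepsilon>0$, matching the randomized decay hypothesis stated before \eqref{equ.50}.
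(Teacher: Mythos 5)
Your setup is correct and matches the paper up to the decisive step: part (1) is identical, the variation-of-constants formula for $X_2$ with the extra $-\beta\,\d\tau$ correction is exactly the paper's expression, and the reduction to the sign of the bracket multiplying $d_{2k}$ is the right reduction. The gap is at the point you yourself flag as the main obstacle, and your fix does not close it. Keeping the template of \eqref{equ.54} --- decay $e^{-\sigma t}$ and an active window $[\tau_2,\tau_3]$ of length $1$ --- the quantity whose sign you must control is, up to mollifier corrections, $\beta\int_{\tau_1}^{t_{2k}}g(\tau)\,\d\tau-\int_{\tau_1}^{t_{2k}}g(\tau)\,\d W(\tau)$ with $g$ supported on a unit interval adjacent to $t_{2k}$. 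The drift part is $O(1)$, while the Wiener integral over a unit window is a nondegenerate Gaussian whose sign fluctuates from one $k$ to the next: along almost every path the increments $W(t_{2k})-W(t_{2k}-1)$ exceed any fixed constant infinitely often in both directions. So for infinitely many $k$ the coefficient of $d_{2k}$ is near zero or has the wrong sign; the pathwise intermediate value theorem then either cannot reach $\tan\gamma_{2k}=e^{-\eta t_{2k}}$ at all or requires $|d_{2k}(\omega)|$ so large that the exponential decay of $\mathbf{Q}$ is destroyed. The bound $|W(t)|=o(t)$ a.s., which you invoke, gives no control whatsoever over increments on windows of fixed length, so making $\mathbf{q}_{21}$ piecewise constant there does not help.

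The paper's proof resolves this with two modifications absent from your plan. First, the decay exponent of the template is changed from $\sigma t$ to $\zeta(t)=(\alpha_1+\alpha_2)t$, so that $\mathbf{q}_{21}(\tau)X_1(\tau)\Phi^{-1}(\tau)$ is constant on the active window and the bracket collapses to $d\,e^{-(\alpha_1+\alpha_2)\tau_1}\bigl[\beta(t_{2k}-\tau_1)-(W(t_{2k})-W(\tau_1))\bigr]$. Second, and crucially, the active window is stretched from length $1$ to length $\hat c=\bigl(1-\frac{1-2c}{1+c}\bigr)t_{2k}$, proportional to $t_{2k}$: then $t_{2k}-\tau_1$ is of order $t_{2k}$ while $|W(t_{2k})-W(\tau_1)|\le 2c\beta t_{2k}$ for all $k>k(\omega)$ a.s., so the drift term dominates the martingale term, the sign is definite for every large $k$, and $d=\frac{1}{c\beta t_{2k}}[e^{(\alpha_1+\alpha_2-\sigma)t_{2k}}+C_1]$ suffices, with $c$ small keeping the decay rate of $\mathbf{Q}(\cdot,\omega)$ arbitrarily close to $\sigma$. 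This is precisely the law-of-large-numbers scale at which $W(t)/t\to0$ becomes usable. A minor further point: your inequality $\mathrm{tr}\,(F+\mathbf{Q})^2\ge\mathrm{tr}\,F^2$ points in the direction that would \emph{weaken} the lower bound on $\det\Phi$ in \eqref{equ.58}; what actually saves the uniqueness argument is that $\mathbf{Q}$ is strictly triangular, so $\mathbf{Q}^2=0$ and $\mathrm{tr}\,(F+\mathbf{Q})^2=\mathrm{tr}\,F^2$ exactly, and \eqref{equ.58} goes through unchanged.
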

\begin{proof}
 The existence of equation \eqref{equ.1} with Lyapunov exponents $\lambda_{1}$ and $\lambda_{2}$ can be proved as same as Theorem \ref{the.1}.
\par  Next, we will prove the second parts of the theorem by using the induction method. We still assume that the norm of the solution $X(t)$ at time $t=t_{2k-1}$ and the angle $\gamma_{2k-1}:=\angle \left\{X(t_{2k-1}), Ox_{2}\right\}$ satisfy the conditions
$$	e^{\xi_{1}t_{2k-1}}e^{W(t_{2k-1})}\leq\Vert X(t_{2k-1})\Vert \leq e^{\xi_{2}t_{2k-1}}e^{W(t_{2k-1})},$$
$$\tan\gamma_{2k-1}=e^{-\eta t_{2k-1}}, \quad \eta=-\theta\sigma+(\theta-1)(\alpha_{1}+\alpha_{2})>0,$$
which are the same as above. But the perturbation $\textbf{Q}(t,\omega)$ is different with the definition above.  According to limit $\lim\limits_{t\rightarrow\infty}\frac{W(t)}{t}=0 ~a.s.$, we know that for almost all $\omega$, there exists a $k(\omega)$ such that $-c\beta<\frac{W(t,\omega)}{t}<c\beta$ when $t\geq t_{2k(\omega)-1}$, where $0<c<\frac{1}{2}$ is a constant.  Denote $\hat{c}:=(1-\frac{(1-2c)}{1+c})t_{2k}$. Then the perturbation can be expressed as follows: 
$$q_{11}(t)=q_{12}(t)=q_{22}(t)=0,\quad t\in[t_{2k-1},t_{2k}),$$
\par
\begin{equation}\label{equ.57}
\textbf{q}_{21}(t)=d_{2k}e^{-\zeta (t)}\left\{
\begin{aligned}
	&0,   &t\in[t_{2k-1}, \tau_{1}),\\
	&e_{01}(t, \tau_{1},\tau_{2}),  &t\in[\tau_{1}, \tau_{2}),\\
	&1,   &t\in[\tau_{2}, \tau_{3}),\\
	&e_{10}(t, \tau_{3},\tau_{4}),  &t\in[\tau_{3}, \tau_{4}),\\
\end{aligned}
\right.
\end{equation}	
$$\epsilon(t)=e^{-t^{2}},\quad \tau_{1}=t_{2k}-\hat{c}-2\epsilon(t_{2k}), \quad \tau_{2}=\tau_{1}+\epsilon(t_{2k}),  $$
$$\tau_{3}=\tau_{2}+\hat{c}, \quad  \tau_{4}=\tau_{3}+\epsilon(t_{2k})=t_{2k},$$
where $d_{2k}$ can be given explicitly below and $\zeta(t)=(\alpha_{1}+\alpha_{2})t$.  We change the length of $[\tau_{2},\tau_{3}]$ from 1 to $\hat{c}$.
\par The perturbed equation \eqref{equ.27} has the expression on the interval $[t_{2k-1},t_{2k})$ as follows:
$$\d X_{1}=\alpha_{1}X_{1}\d t+\beta X_{1}\d W,\quad \d X_{2}=-\alpha_{2}X_{2}\d t+[\textbf{q}_{21}(t)X_{1}+\beta X_{2}]\d W.$$
Then we get the following solution on the interval $[t_{2k-1},t_{2k}]$
\begin{align*}
X_{1}(t)&=X_{1}(t_{2k-1})e^{(\alpha_{1}-\frac{1}{2}\beta^{2})(t-t_{2k-1})+\beta[W(t)-Wt_{2k-1})]},\quad t\in [t_{2k-1},t_{2k}),\\
X_{2}(t)&=X_{2}(t_{2k-1})e^{(-\alpha_{2}-\frac{1}{2}\beta^{2})(t-t_{2k-1})+\beta[W(t)-W(t_{2k-1})]},\quad t\in [t_{2k-1},\tau_{1}],\\
X_{2}(t)&=\Phi(t)\big[X_{2}(\tau_{1})-\int_{\tau_{1}}^{t} \beta \textbf{q}_{21}(\tau) X_{1}(\tau)\Phi^{-1}(\tau)\d \tau\\
&\qquad\qquad\qquad+\int_{\tau_{1}}^{t}\textbf{q}_{21}(\tau)X_{1}(\tau) \Phi^{-1}(\tau)\d W(\tau) \big], \qquad t\in [\tau_{1},t_{2k}),
\end{align*}
where $\Phi(t)$ has the expression
$$\Phi(t)=e^{(-\alpha_{2}-\frac{1}{2}\beta^{2})(t-\tau_{1})+\beta[W(t)-W(\tau_{1})]}.$$
\par Next, we will estimate  the range of $\tan \gamma_{2k}$ by giving different values to $d_{2k}$. If $d_{2k}=0$, we have the following estimate
$$
	\begin{aligned}
		\tan \gamma_{2k}&=\frac{X_{2}(t_{2k})}{X_{1}(t_{2k})}
		=\cot \gamma_{2k-1}e^{-(\alpha_{1}+\alpha_{2})(t_{2k}-t_{2k-1})}
		=e^{-\sigma t_{2k}}.
	\end{aligned}$$
If $d_{2k}=d >0$,  the upper bound of $\tan \gamma_{2k}$ is obtained as follows:
$$
	\begin{aligned}
		\tan \gamma_{2k}=&\frac{X_{2}(t_{2k})}{X_{1}(t_{2k})}\\
		=&\frac{X_{2}(\tau_{1})-\int_{\tau_{1}}^{t_{2k}} \beta \textbf{q}_{21}(\tau)X_{1}(\tau) \Phi^{-1}(\tau)\d \tau+\int_{\tau_{1}}^{t_{2k}}\textbf{q}_{21}(\tau) X_{1}(\tau)\Phi^{-1}(\tau)\d W(\tau) }{X_{1}(\tau_{1})} \\
		&\cdot e^{(-\alpha_{2}-\frac{1}{2}\beta^{2})(t_{2k}-t_{2k-1})+\beta[W(t_{2k})-W(t_{2k-1})]} \\ &\cdot e^{-(\alpha_{1}-\frac{1}{2}\beta^{2})(t_{2k}-t_{2k-1})-\beta[W(t_{2k})-W(t_{2k-1})]}.
	\end{aligned}$$
So we only consider the first component of the equality,
$$
	\begin{aligned}
		&\frac{X_{2}(\tau_{1})-\int_{\tau_{1}}^{t_{2k}} \beta \textbf{q}_{21}(\tau)X_{1}(\tau) \Phi^{-1}(\tau)\d \tau+\int_{\tau_{1}}^{t_{2k}}\textbf{q}_{21}(\tau) X_{1}(\tau)\Phi^{-1}(\tau)\d W(\tau) }{X_{1}(\tau_{1})}\\
			\end{aligned}$$
$$
	\begin{aligned}
		\leq& \frac{X_{2}(\tau_{1})}{X_{1}(\tau_{1})}-e^{-(\alpha_{1}+\alpha_{2})\tau_{1}}\big[\int_{\tau_{1}}^{t_{2k}} d\beta \d \tau-\int_{\tau_{1}}^{t_{2k}}d \d W(\tau)\big] +C_{1}\\
		\leq & e^{-\sigma t_{2k}+(\alpha_{1}+\alpha_{2})(t_{2k}-\tau_{1})}-e^{-(\alpha_{1}+\alpha_{2})\tau_{1}}\big[d\beta(t_{2k}-\tau_{1})-d(W(t_{2k})-W(\tau_{1}))\big]+C_{1}\\
		\leq & e^{-\sigma t_{2k}+(\alpha_{1}+\alpha_{2})(t_{2k}-\tau_{1})}-e^{-(\alpha_{1}+\alpha_{2})\tau_{1}}[dc\beta t_{2k}]+C_{1},
	\end{aligned}$$
where $C_{1}$ has a similar expression to \eqref{equ.63} and has no effect on exponentially decaying property of $\mathbf{q}_{21}$.
We should remark that the stochastic integral $\int_{\tau_{1}}^{\tau_{2}}\textbf{q}_{21}(\tau)\d W$ has the following expression by It\^o's formula:
$$\int_{\tau_{1}}^{\tau_{2}}\textbf{q}_{21}(\tau)\d W=\textbf{q}_{21}(\tau)W(\tau)\Big|^{\tau_{2}}_{\tau_{1}}-\int_{\tau_{1}}^{\tau_{2}}\textbf{q}'_{21}(\tau)W(\tau)\d \tau,$$
where $\textbf{q}'$ is the derivative of $\textbf{q}$ with respect to $t$. It
 can be controlled by inequality $-c\beta<\frac{W(t)}{t}<c\beta$. It's not difficult to proof that  $C_{1}$ has no effect on exponentially decaying property of $\mathbf{q}_{21}$. According to the inequality above,  $\tan \gamma_{2k}$ will vanish by the value
$$d= \frac{1}{c\beta t_{2k}}[e^{-\sigma t_{2k}+(\alpha_{1}+\alpha_{2})t_{2k}}+C_{1}],$$

Because $\tan \gamma_{2k}$ depends on $d$ continuously, there exists a $d_{2k}(\omega)\leq d$ such that the angle satisfies $\tan \gamma_{2k}=e^{-\eta t_{2k}}$. It implies that for almost all $\omega$, the perturbation $\textbf{Q}(t,\omega)$ is exponentially decaying:
$$\Vert Q (t,\omega)\Vert\leq d(t,\omega)e^{-\zeta(t)}\leq Ce^{-(\sigma+\epsilon)t}, $$
where $\epsilon$ is arbitrarily small and $C$ is a constant. The next step of the proof is same  as that in the proof of Theorem \ref{the.1}. We can get the estimate
$$	e^{\xi_{1}t_{2k}}e^{\beta W(t_{2k})}\leq\Vert X(t_{2k})\Vert \leq e^{\xi_{2}t_{2k}}e^{\beta W(t_{2k})}.$$

The remaining proof is similar to that of Theorem \ref{the.1}, so we will not repeat here. In addition, the perturbed equation satisfies the Lyapunov inequality \cite{1998Stochastic} and Liouville's theorem \cite{Vrko1978Liouville}, so the uniqueness of the solution with negative Lyapunov exponent is guaranteed.
\end{proof}

\subsection{The perturbation at inhomogeneous term}
 We try now to solve another similar problem.  We have the following theorem for the the perturbed equation \eqref{equ.46}.
 
\begin{theorem}\label{the.6}
 	\begin{itemize}
	 \item [(1)] For any constants $\lambda_{2}\geq\lambda_{1}>0$, $\theta>1$, there exists a $2$-dimensional equation \eqref{equ.1} with bounded $C^{\infty}$ coefficients and Lyapunov exponents $\lambda_{1}, \lambda_{2}$.
      \item [(2)] For this equation and any constant  $\sigma\in(0,\frac{(\theta-1)\lambda_{1}}{\theta})$, there exists a $C^{\infty}$ random perturbation $\textbf{Q}$ satisfying that for almost all $\omega$, $\textbf{Q}(t,\omega)$ is exponentially decaying, such that the perturbed equation \eqref{equ.46} has a solution with negative Lyapunov exponent $\lambda_{0}(\omega)\leq\lambda(\theta,\lambda_{1},\sigma)<0$ where $\lambda(\theta,\lambda_{1},\sigma)$ will be given explicitly below.
	 	\end{itemize}
\end{theorem}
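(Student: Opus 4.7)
The plan is to mirror the constructions of Theorems \ref{the.5} and \ref{the.7}, combining the stochastic-integral analysis of the former with the additive inhomogeneous perturbation format of the latter.

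First, I would construct the 2-dimensional diagonal equation \eqref{equ.1} exactly as in step (1) of Theorem \ref{the.1}, with coefficients $a_1(t), a_2(t)$ and $b(t) \equiv \beta$ tuned so the Lyapunov exponents are $\lambda_1, \lambda_2$. For the random perturbation, take the column vector $\textbf{Q}(t, \omega) = (0, \textbf{q}_2(t, \omega))^T$ supported on the intervals $[t_{2k-1}, t_{2k})$ with the Gelbaum--Olmsted cutoff shape of \eqref{equ.57} and amplitude $d_{2k}(\omega)\,e^{-\zeta(t)}$, where $\zeta(t) = (\alpha_1 + \alpha_2) t$. The goal is inductive: assuming the norm and angle estimates
$$e^{\xi_1 t_{2k-1}} e^{\beta W(t_{2k-1})} \leq \|X(t_{2k-1})\| \leq e^{\xi_2 t_{2k-1}} e^{\beta W(t_{2k-1})}, \qquad \tan \gamma_{2k-1} = e^{-\eta t_{2k-1}}$$
at $t_{2k-1}$, one chooses $d_{2k}(\omega)$ so that the corresponding estimates at $t_{2k}$ are reproduced.

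The first component $X_1$ is unaffected by $\textbf{Q}$ and satisfies the same geometric Brownian motion as in Theorem \ref{the.1}. The second component solves
$$\d X_2 = -\alpha_2 X_2 \d t + [\beta X_2 + \textbf{q}_2(t)] \d W.$$
Variation of constants with $\Phi(t) = e^{(-\alpha_2 - \frac{1}{2}\beta^2)(t - \tau_1) + \beta [W(t) - W(\tau_1)]}$ yields
$$X_2(t) = \Phi(t)\Bigl[X_2(\tau_1) - \beta \int_{\tau_1}^{t} \Phi^{-1}(\tau)\textbf{q}_2(\tau)\d\tau + \int_{\tau_1}^{t} \Phi^{-1}(\tau)\textbf{q}_2(\tau)\d W(\tau)\Bigr].$$
Setting $d_{2k} = 0$ gives $\tan \gamma_{2k} = e^{-\sigma t_{2k}}$, while for sufficiently large $d > 0$ the extra Lebesgue and stochastic integrals drive $\tan \gamma_{2k}$ below zero. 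By continuous dependence on $d_{2k}$, a pathwise $d_{2k}(\omega)$ achieving exactly $\tan \gamma_{2k} = e^{-\eta t_{2k}}$ exists.

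The main obstacle will be controlling the stochastic integral and verifying that the resulting $d_{2k}(\omega)$ preserves the exponential decay of $\textbf{Q}(t, \omega)$. Following the proof of Theorem \ref{the.5}, the It\^{o} integration-by-parts identity
$$\int_{\tau_1}^{\tau_2} \textbf{q}_2(\tau)\d W(\tau) = \textbf{q}_2(\tau) W(\tau)\Big|_{\tau_1}^{\tau_2} - \int_{\tau_1}^{\tau_2} \textbf{q}'_2(\tau) W(\tau)\d\tau$$
reduces the stochastic term to pathwise quantities controlled by $|W(t)/t| < c\beta$ for large $t$ almost surely. With $\zeta(t) = (\alpha_1 + \alpha_2)t$ this leaves at worst a loss of rate $\sigma + \eps$ for arbitrarily small $\eps > 0$, so $\|\textbf{Q}(t, \omega)\| \leq C e^{-(\sigma + \eps)t}$ a.s. The remaining steps --- propagating the norm estimate on $[t_{2k-1}, t_{2k})$, extending to $[t_{2k}, t_{2k+1})$ by an analogous perturbation with the roles of the two coordinates swapped, polishing $A(t)$ to a $C^\infty$ matrix via the function \eqref{equ.60}, and deducing the Lyapunov exponent bound $\lambda_0(\omega) \leq \xi_2 = \lambda(\theta, \lambda_1, \sigma) < 0$ --- follow the proofs of Theorems \ref{the.1} and \ref{the.5} with only cosmetic changes.
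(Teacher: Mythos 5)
Your outline follows the paper's strategy (same diagonal unperturbed equation, same inductive norm/angle scheme, same variation-of-constants formula for $X_2$, same continuity-in-$d_{2k}$ argument), but there is one substantive error: you take the decay profile $\zeta(t)=(\alpha_1+\alpha_2)t$ verbatim from Theorem \ref{the.5}, and for equation \eqref{equ.46} that choice does not work. In Theorem \ref{the.5} the perturbation multiplies $X_1$, so the integrand in the variation-of-constants formula is $\textbf{q}_{21}(\tau)X_1(\tau)\Phi^{-1}(\tau)$, and the Brownian exponents of $X_1(\tau)$ and $\Phi^{-1}(\tau)$ cancel; the deterministic rate $\alpha_1+\alpha_2$ then exactly neutralizes the remaining deterministic growth, leaving a constant integrand on the plateau $[\tau_2,\tau_3]$. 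In \eqref{equ.46} the perturbation enters additively, the integrand is $\textbf{q}_2(\tau)\Phi^{-1}(\tau)$ with no compensating $X_1$ factor, and with your $\zeta$ it retains the random factor $e^{-\beta[W(\tau)-W(\tau_1)]}$ together with a mismatched deterministic exponent, over an interval whose length is proportional to $t_{2k}$. Two consequences: (i) the lower bound on the Lebesgue term and the control of the martingale term no longer reduce to the clean comparison $d\beta(t_{2k}-\tau_1)-d(W(t_{2k})-W(\tau_1))\geq dc\beta t_{2k}$, so the claimed conclusion $\tan\gamma_{2k}\leq 0$ for large $d$ is not established; and (ii) your It\^{o} integration-by-parts step requires the integrand of the stochastic integral to be of finite variation in $\tau$, which fails here because $\Phi^{-1}(\tau)$ contains $e^{-\beta W(\tau)}$.

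The paper's fix is precisely to make the decay profile random: it sets $\zeta(t)=(\alpha_2+\tfrac12\beta^2)t-\beta W(t)$, so that $\textbf{q}_2(\tau)\Phi^{-1}(\tau)=d\,e^{-(\alpha_2+\frac12\beta^2)\tau_1+\beta W(\tau_1)}$ is constant on the plateau. The stochastic integral then evaluates exactly to this constant times $W(t_{2k})-W(\tau_1)$ (no integration by parts needed for the main term; the ramp contributions are absorbed into the $C_1$ of \eqref{equ.63}), the drift term dominates via $\vert W(t)/t\vert<c\beta$ and the choice of $\hat{c}$, and the a.s.\ exponential decay of $\textbf{Q}(t,\omega)$ follows from $W(t)/t\rightarrow 0$. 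This is also the reason the perturbation in Theorem \ref{the.6} must genuinely depend on $\omega$, a point your proposal obscures by reusing a deterministic rate. The rest of your plan (propagation of the norm estimate, the swapped-coordinate step on $[t_{2k},t_{2k+1})$, polishing, and the bound $\lambda_0(\omega)\leq\xi_2$) does match the paper once this is repaired.
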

 \begin{proof}
The proof of the first part is similar to Theorem \ref{the.1}, so we mainly discuss different parts.
\par  In the second part, we use same assumptions
$$e^{\xi_{1}t_{2k-1}}e^{W(t_{2k-1})}\leq\Vert X(t_{2k-1})\Vert \leq e^{\xi_{2}t_{2k-1}}e^{W(t_{2k-1})},$$
$$\tan\gamma_{2k-1}=e^{-\eta t_{2k-1}}, \quad \eta=-\theta\sigma+(\theta-1)(\alpha_{1}+\alpha_{2})>0.$$
\par The perturbation $\textbf{Q}(t,\omega)$ is expressed by vector
$$\textbf{Q}(t,\omega)=\left(\begin{array}{c}
	0 \\
	\textbf{q}_{2}(t,\omega)
\end{array}\right),$$
where $\textbf{q}_{2}$ has similar form with \eqref{equ.57} but the  value of $\zeta(t)$  is changed from $(\alpha_{1}+\alpha_{2})t$ to   $(\alpha_{2}+\frac{1}{2}\beta^{2})t-\beta W(t)$. And the components of the solution $X(t)=(X_{1}(t), X_{2}(t))$ 
are expressed as follows:
$$
	\begin{aligned}
	X_{1}(t)&=X_{1}(t_{2k-1})e^{(\alpha_{1}-\frac{1}{2}\beta^{2})(t-t_{2k-1})+\beta[W(t)-Wt_{2k-1})]}   ,\quad t\in [t_{2k-1},t_{2k}),\\
	X_{2}(t)&=X_{2}(t_{2k-1})e^{(-\alpha_{2}-\frac{1}{2}\beta^{2})(t-t_{2k-1})+\beta[W(t)-W(t_{2k-1})]}   ,\quad t\in [t_{2k-1},\tau_{1}],
		\end{aligned}$$
$$
	\begin{aligned}	X_{2}(t)=\Phi(t)
	\big 	[X_{2}(\tau_{1})-\int_{\tau_{1}}^{t} \beta \textbf{q}_{2}(\tau) \Phi^{-1}(\tau)\d \tau+\int_{\tau_{1}}^{t}\textbf{q}_{2}(\tau) \Phi^{-1}(\tau)\d W(\tau) \big ] ,\quad t\in [\tau_{1},t_{2k}),
	\end{aligned}$$
where $\Phi(t)$ has the expression
$$\Phi(t)=e^{(-\alpha_{2}-\frac{1}{2}\beta^{2})(t-\tau_{1})+\beta[W(t)-W(\tau_{1})]} .$$
\par The value of $\tan \gamma_{2k}$ has the same lower bound:
$$
\tan \gamma_{2k}=\frac{X_{2}(t_{2k})}{X_{1}(t_{2k})}
=\cot \gamma_{2k-1}e^{-(\alpha_{1}+\alpha_{2})(t_{2k}-t_{2k-1})}
=e^{-\sigma t_{2k}}.
$$
The difference appears in the estimation of upper bound. We take similar steps:
\begin{align*}
\tan \gamma_{2k}
=&\frac{X_{2}(t_{2k})}{X_{1}(t_{2k})}\\
=&\frac{X_{2}(\tau_{1})-\int_{\tau_{1}}^{t_{2k}} \beta \textbf{q}_{2}(\tau) \Phi^{-1}(\tau)\d \tau+\int_{\tau_{1}}^{t_{2k}}\textbf{q}_{2}(\tau) \Phi^{-1}(\tau)\d W(\tau) }{X_{1}(\tau_{1})}\\
&\cdot e^{(-\alpha_{2}-\frac{1}{2}\beta^{2})(t_{2k}-\tau_{1})+\beta[W(t_{2k})-W(\tau_{1})]} \\ &\cdot e^{-(\alpha_{1}-\frac{1}{2}\beta^{2})(t_{2k}-\tau_{1})-\beta[W(t_{2k})-W(\tau_{1})]},
\end{align*}
and it is easy to see that the first component of the inequality satisfies the following estimates
\begin{align*}
&\frac{X_{2}(\tau_{1})-\int_{\tau_{1}}^{t_{2k}} \beta \textbf{q}_{2}(\tau) \Phi^{-1}(\tau)\d \tau+\int_{\tau_{1}}^{t_{2k}}\textbf{q}_{2}(\tau) \Phi^{-1}(\tau)\d W(\tau) }{X_{1}(\tau_{1})}\\
\leq &\frac{X_{2}(\tau_{1})}{X_{1}(\tau_{1})}-e^{-(\alpha_{2}+\frac{1}{2}\beta^{2})\tau_{1}+\beta W(\tau_{1})}\big [\int_{\tau_{1}}^{t_{2k}} d\beta \d \tau+\int_{\tau_{1}}^{t_{2k}}d \d W(\tau)\big ]+C_{1}\\
\leq & e^{-\sigma t_{2k}+(\alpha_{1}+\alpha_{2})(t_{2k}-\tau_{1})}-e^{-(\alpha_{2}+\frac{1}{2}\beta^{2})\tau_{1}+\beta W(\tau_{1})}\big [d\beta(t_{2k}-\tau_{1})-d(W(t_{2k})-W(\tau_{1}))\big]+C_{1}\\
\leq & e^{-\sigma t_{2k}+(\alpha_{1}+\alpha_{2})(t_{2k}-\tau_{1})}-e^{-(\alpha_{2}+\frac{1}{2}\beta^{2})\tau_{1}+\beta W(\tau_{1})}\big [dc\beta t_{2k}\big ]+C_{1},
\end{align*}
where $C_{1}$ has a similar expression to \eqref{equ.63} and it has no effect on exponentially decaying property of $\mathbf{q}_{2}$.
Then the angle will have the estimate $\tan \gamma_{2k}\leq 0$ by taking
$$d= \frac{1}{c\beta t_{2k}}e^{(\alpha_{2}+\frac{1}{2}\beta^{2})\tau_{1}-\beta W(\tau_{1})}[e^{-\sigma t_{2k}+(\alpha_{1}+\alpha_{2})(t_{2k}-\tau_{1})}+C_{1}].$$
Because the angle depends on $d_{2k}$ continuously, for almost all $\omega$, we can take an appropriate $d_{2k}(\omega)\leq d$ such that the angle satisfies $\tan \gamma_{2k}=e^{-\eta t_{2k}}$. So we have constructed the desired perturbation. It is not difficult to get the estimate
$$e^{\xi_{1}t_{2k}}e^{\beta W(t_{2k})}\leq\Vert X(t_{2k})\Vert \leq e^{\xi_{2}t_{2k}}e^{\beta W(t_{2k})}.$$
\par Finally, we prove the conclusion by the similar discussion as in Theorem \ref{the.1}. 
 \end{proof}
\begin{remark}\rm
The results of Theorem \ref{the.5} and Theorem \ref{the.6} are also true for $n$--dimensional situation. 
\end{remark}

\section{Discussions}
On the one hand, we cannot get a deterministic perturbation in the last three cases. As we know, it needs two procedures to get a deterministic perturbation. They are initial condition at $t_{2k-1}$ and inductive hypothesis. First, assume that we get a deterministic perturbation on the interval $[t_{2k},\infty)$ such that the perturbed equation satisfies the inductive hypothesis
\begin{equation}\label{equ.48}
	\begin{aligned}
	&e^{\xi_{1}t_{2k-1}}e^{\beta W(t_{2k-1})}\leq\Vert X(t_{2k-1})\Vert \leq e^{\xi_{2}t_{2k-1}}e^{\beta W(t_{2k-1})},\\
e^{-\eta t_{2k-1}}&\leq\tan\gamma_{2k-1}\leq e^{-\sigma t_{2k-1}}, \quad \eta=-\theta\sigma+(\theta-1)(\alpha_{1}+\alpha_{2})>0.
\end{aligned}
\end{equation}
 Then on the interval $[t_{1},t_{2k-1}]$, we need a determined perturbation such that the perturbed equation satisfies the initial condition: for  almost all $\omega$, there exists a large enough $k(\omega)$, such that for a fixed $k>k(\omega)$, we have estimate \eqref{equ.48} at time $t=t_{2k-1}$. Indeed, this situation is difficult to achieve under a deterministic perturbation. Let us consider the $2$-dimensional perturbed equation \eqref{equ.50}. The probability of solution $X(t)$ which arrives at second or forth quadrants at time $t_{2k-1}$ is positive. We define this set of positive measures by $\Omega_{1}$.  From the form of the solution of the  perturbed equation
\begin{align*}	
X_{1}(t)&=X_{1}(t_{2k-1})e^{(\alpha_{1}-\frac{1}{2}\beta^{2})(t-t_{2k-1})+\beta[W(t)-W(t_{2k-1})]}   ,\quad t\in [t_{2k-1},t_{2k}],\\
X_{2}(t)&=X_{2}(t_{2k-1})e^{(-\alpha_{2}-\frac{1}{2}\beta^{2})(t-t_{2k-1})+\beta[W(t)-W(t_{2k-1})]}   ,\quad t\in [t_{2k-1},\tau_{1}],\\
X_{2}(t)&=e^{(-\alpha_{2}-\frac{1}{2}\beta^{2})(t-\tau_{1})+\beta[W(t)-W(\tau_{1})]}  \cdot \big[X_{2}(\tau_{1})\\
&\qquad+\int_{\tau_{1}}^{t} q_{21}(\tau)e^{(\alpha_{2}+\frac{1}{2}\beta^{2})(\tau-\tau_{1})-\beta[W(\tau)-W(\tau_{1})]}\d \tau \big] ,\quad t\in [\tau_{1},t_{2k}],
\end{align*}
we notice that for any $\omega\in\Omega_{1}$,  solution $X(t,\omega)$ will never leave the second or forth quadrants. And  solution $X(t,\omega)$ is unstable because the absolute value of components $X_{1}(t,\omega)$ and $X_{2}(t,\omega)$ of solution $X(t,\omega)$ are always increasing when $t\rightarrow\infty$. So deterministic perturbations cannot have the similar conclusion.
 \par On the other hand, it is difficult to get an equation \eqref{equ.1} and a perturbation such that Lyapunov exponents of the perturbed equation \eqref{equ.2} are changed from all positive to all negative.
  The uniqueness of solution with negative Lyapunov exponent partly reflects this situation. There is always an exponentially increasing solution under the perturbation above. We tried many other methods but the  perturbation is always unable to make all solutions stable.

\section*{Acknowledgements}
The authors are very grateful to the referee for his/her very careful reading of the paper and for many valuable suggestions which lead to significant improvements of the paper. This work is partially supported by NSFC Grants 11871132, 11925102, Dalian High-level
Talent Innovation Project (Grant 2020RD09), and Xinghai Jieqing fund from Dalian University
of Technology.

%\bibliographystyle{plain}
%\bibliography{b.bib}
\end{document}